\numberwithin{equation}{section}
  \theoremstyle{definition}
  \newtheorem{defn}{\protect\definitionname}[section]
  \theoremstyle{plain}
  \newtheorem{thm}{\protect\theoremname}[section]
  \theoremstyle{remark}
  \newtheorem{rem}{\protect\remarkname}[section]
  \theoremstyle{plain}
  \newtheorem{prop}{\protect\propositionname}[section]
  \theoremstyle{plain}
  \newtheorem{lem}{\protect\lemmaname}[section]
  \providecommand{\definitionname}{Definition}
  \providecommand{\lemmaname}{Lemma}
  \providecommand{\propositionname}{Proposition}
  \providecommand{\remarkname}{Remark}
\providecommand{\theoremname}{Theorem}
\begin{document}

\title{The Signature of a Rough Path: Uniqueness }

\author{Horatio Boedihardjo%
\thanks{Department of Mathematics and Statistics, University of Reading. The
main part of his contribution on this paper was made while at Oxford-Man
Institute, University of Oxford. %
} , Xi Geng%
\thanks{Mathematical Institute and Oxford-Man Institute, University of Oxford %
}, Terry Lyons$^{\dagger}$ and Danyu Yang%
\thanks{Oxford-Man Institute, University of Oxford %
}}
\maketitle
\begin{abstract}
In the context of controlled differential equations, the signature
is the exponential function on paths. B. Hambly and T. Lyons proved
that the signature of a bounded variation path is trivial if and only
if the path is tree-like. We extend Hambly-Lyons' result and their
notion of tree-like paths to the setting of weakly geometric rough
paths in a Banach space. At the heart of our approach is a new definition
for reduced path and a lemma identifying the reduced path group with
the space of signatures. 
\end{abstract}

\section{Introduction }

In K.T. Chen's work \cite{Chen's first paper } on the cohomology
of the loop space, he defined and systematically studied the formal
series of iterated integrals 
\begin{equation}
S\left(x\right)=1+\sum_{i_{1}}\int_{0}^{T}\mathrm{d}x_{t_{1}}^{i_{1}}X_{i_{1}}+\sum_{i_{1},i_{2}}\int_{0}^{T}\int_{0}^{t_{2}}\mathrm{d}x_{t_{1}}^{i_{1}}\mathrm{d}x_{t_{2}}^{i_{2}}X_{i_{1}}\cdot X_{i_{2}}+\ldots\label{eq:signature as iterated integrals}
\end{equation}
where $x:\left[0,T\right]\rightarrow\mathbb{R}^{d}$ is a path with
bounded variation and $X_{1},\ldots,X_{d}$ are formal non-commutative
indeterminates. After proving a homomorphism property of the map $S$
(\cite{Chen's first paper }, see (\ref{eq:homomorphism}) below),
he gave an argument \cite{Chen uniqueness } that the map $S$ restricted
to appropriate classes of paths is, up to translation and reparametrisation,
injective. Hambly and Lyons \cite{tree like}, motivated by the application
of the map $S$ in rough path theory, posed the following problem:
\begin{quotation}
\textit{How to characterise the kernel of the map $S$? }
\end{quotation}
Hambly and Lyons \cite{tree like} proved that for a bounded variation
path $x$, $S\left(x\right)=1$ if and only if $x$ is\textit{ tree-like}.
They conjectured that the result extends to weakly geometric rough
paths, a fundamental class of control paths for which controlled differential
equations can be defined. Their result directly implies that the space
of bounded variation paths, quotiented by the space of tree-like paths,
forms a group with respect to the concatenation operation. They called
this quotient space the \textit{reduced path group}. 

In \cite{LQ12}, LeJan and Qian answered a special case of Hambly-Lyons
conjecture. They proved that, when restricted on the complement of
a Wiener measure zero set, the map $S$ (defined using Stratonovich
integration) is injective. There has been a number of other partial
results for particular cases of weakly geometric rough paths (\cite{GZ14,BNQ13,BG14,BG13}).
A key observation in the proof of Hambly-Lyons and further refined
by LeJan and Qian is that the iterated integrals of $1$-forms along
a path is a linear functional of the signature of the path. It turns
out that a subtle variant of this idea, by considering the $1$-form
along the iterated integrals of the path work to prove Hambly-Lyons'
conjecture. 

To formulate the extension of Hambly-Lyons result, we must find the
correct notion of tree-like weakly geometric rough paths. Hambly-Lyons'
definition of tree-like path is inappropriate in the setting of weakly
geometric rough paths. In fact, it is easy to prove that if $x$ is
an injective path with finite $p$-variation ($p>1$), but not finite
$1$-variation, then $S\left(x\star\overleftarrow{x}\right)=\mathbf{1}$
but $x\star\overleftarrow{x}$ won't be tree-like in the sense of
Hambly-Lyons. A tree-like path $x$ (in the sense of Hambly-Lyons)
has the property that there exists a continuous function $h:\left[0,1\right]\rightarrow\mathbb{R}$,
$h_{t}\geq0$ for all $t\in\left[0,1\right]$, $h_{1}=h_{0}$ and
\[
h_{s}=h_{t}=\inf_{s\leq u\leq t}h_{u}\implies x_{s}=x_{t}.
\]
We will take an equivalent formulation of this property (see \cite{Duquesne coding of real tree,Hambly Lyons note on trees})
as the definition of tree-like path, as follows.
\begin{defn}
\label{tree like path}Let $E$ be a topological space. A continuous
path $x:\left[0,T\right]\rightarrow E$ is \textit{tree-like} if there
exists a $\mathbb{R}$-tree $\tau$, a continuous map $\phi:\left[0,T\right]\rightarrow\tau$
and a map $\psi:\tau\rightarrow E$ such that $\phi\left(0\right)=\phi\left(T\right)$
and $x=\psi\circ\phi$. 
\end{defn}
This definition of tree-like path is equivalent to Hambly-Lyons' definition
when the path has bounded variation. The tree-like paths are also
interesting in the context of homotopy. In Section 5.7 in \cite{LVY11},
T. Levy showed that a bounded variation path is tree-like if and only
if it is contractible to the constant path within its own image. 

We will use our Definition \ref{tree like path} of tree-like path
to extend Hambly-Lyons' result to weakly geometric rough paths in
Banach spaces. The assumptions on the tensor product, which is only
needed in the infinite dimensional case, and the issue of defining
the map $S$ for weakly geometric rough paths will be discussed in
Section 2.1. 
\begin{thm}
\label{main result}Let $V$ be a Banach space such that the tensor
powers of $V$ is equipped with a family of tensor norms satisfying
(\ref{eq:admissible 1}), (\ref{eq:admissible 2}) and (\ref{eq:tensor condition 3}).
Let $x$ be a weakly geometric rough path in $V$. Then $S\left(x\right)=\mathbf{1}$
if and only if $x$ is tree-like in the sense of Definition \ref{tree like path}. \end{thm}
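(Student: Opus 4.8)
The plan is to prove the two implications separately; the forward direction is the easier one and I would establish it first and independently. Writing $x=\psi\circ\phi$ with $\phi\colon[0,T]\to\tau$ a continuous loop in an $\mathbb R$-tree, one reduces to the case $\tau=\phi([0,T])$ compact with $\psi$ continuous there. For a loop in a \emph{finite} $\mathbb R$-tree the claim follows by induction on the number of edges from the homomorphism property (\ref{eq:homomorphism}): suitably reparametrised, such a loop exhibits an exact backtrack $a\star\overleftarrow a$, contributing a factor $S(a)S(\overleftarrow a)=S(a)S(a)^{-1}=\mathbf 1$, after whose removal one has a loop in a smaller tree. A general compact $\mathbb R$-tree is exhausted by finite subtrees; with a judicious choice of parametrisation the corresponding truncations of $x$ converge to $x$ in the rough-path metric, and continuity of the signature map --- which in infinite dimensions is precisely where the tensor-norm hypotheses (\ref{eq:admissible 1}), (\ref{eq:admissible 2}) and (\ref{eq:tensor condition 3}) are needed --- then yields $S(x)=\mathbf 1$.

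For the converse, ``$S(x)=\mathbf 1\Rightarrow x$ tree-like'', the first ingredient is formal: declaring $x\sim y$ whenever $S(x\star\overleftarrow y)=\mathbf 1$ and using (\ref{eq:homomorphism}) together with $S(\overleftarrow y)=S(y)^{-1}$, one checks that $\sim$ is an equivalence relation, that $\{S(x):x\text{ a weakly geometric rough path}\}$ is a group under tensor multiplication, and that $x\mapsto S(x)$ descends to an isomorphism onto it. The real work is to single out a canonical representative in each class: I would introduce a notion of \emph{reduced} weakly geometric rough path --- one with no interior backtracking, hence containing no non-constant tree-like sub-excursion --- and prove that every weakly geometric rough path $x$ has a reduced form $\hat x$, unique up to reparametrisation, with $S(\hat x)=S(x)$. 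Crucially the excisions here are of \emph{exact backtracking} excursions only, for which triviality of the signature is already granted by (\ref{eq:homomorphism}), so this step is not circular with the forward direction; it is carried out by a maximality (Zorn-type) argument, and its bookkeeping is organised by an $\mathbb R$-tree $\tau_x$ and a continuous map $\phi_x\colon[0,T]\to\tau_x$ through which $x$ factors, with $\phi_x(0)=\phi_x(T)$ exactly when $\hat x$ is constant. The lemma identifying the reduced path group --- reduced paths modulo reparametrisation --- with the space of signatures is then the injectivity of $[\hat x]\mapsto S(\hat x)$; for Theorem \ref{main result} only the special case is needed, namely that the constant path is the \emph{unique} reduced path with signature $\mathbf 1$.

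That special case --- equivalently, a non-constant reduced weakly geometric rough path has non-trivial signature --- is the heart of the proof and the step I expect to be the main obstacle. Hambly--Lyons treat the bounded variation case by developing the path into hyperbolic space and using the divergence of geodesics to bound the length of a reduced path from below by a functional of its signature; since for rough paths the length is infinite, I would instead develop $x$ --- necessarily through its full lift, i.e.\ its iterated integrals up to level $\lfloor p\rfloor$ --- into a suitable homogeneous space of non-positive curvature, and observe that a $1$-form pulled back along this development, integrated against the enhanced (iterated-integral) path, is a continuous linear functional of $S(x)$: this is the ``$1$-form along the iterated integrals'' device. If $S(x)=\mathbf 1$ then all such functionals vanish, all such developments are loops, and a divergence-of-geodesics estimate forces the reduced path to be constant. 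Making this rigorous over an arbitrary Banach space --- defining the rough-path development, controlling the ensuing estimates (where the hypotheses (\ref{eq:admissible 1})--(\ref{eq:tensor condition 3}) are used throughout), and excluding degeneracies in the curved target --- is where the difficulty concentrates. Granting it, the theorem follows: if $S(x)=\mathbf 1$ then $x\sim$ (constant path), so $\hat x$ is constant, hence $\phi_x(0)=\phi_x(T)$ and $x=\psi\circ\phi_x$ for a suitable $\psi\colon\tau_x\to V$, i.e.\ $x$ is tree-like in the sense of Definition \ref{tree like path}; combined with the forward direction this completes the proof.
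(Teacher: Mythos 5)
Your forward direction is essentially the paper's: cancel exact backtracks via the homomorphism property (\ref{eq:homomorphism}), approximate a general tree-like loop by loops living on finite subtrees, and pass to the limit by continuity of $S$. One caveat: a continuous loop in a finite tree --- even in a single segment --- is \emph{not} in general a finite concatenation of exact backtracks (it may oscillate infinitely often), so ``suitably reparametrised'' hides a real step. The paper supplies it with B\"orger's loop-erasing lemma applied on the subintervals cut out by the branch points of $\phi(\mathcal{P})$, producing a \emph{piecewise monotone} loop to which the backtrack-cancellation induction applies, together with a uniform $p$-variation bound so that the limit can be taken (uniform convergence plus bounded $p$-variation, not convergence in the rough-path metric, which would be harder to arrange).

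The converse is where the genuine gap lies, in two places. First, your crux --- ``a non-constant reduced weakly geometric rough path has non-trivial signature'' --- is deferred to a hyperbolic/non-positively-curved development of the lifted path and a divergence-of-geodesics estimate. You correctly identify this as the main obstacle and then write ``Granting it''; but this is precisely the step the paper does \emph{not} do, and for good reason: the Hambly--Lyons development argument bounds the \emph{length} of a reduced path from below, and no workable substitute for length is available for genuine rough paths. The paper's ``$1$-form along the iterated integrals'' device is not a development into curved space. It is Lemma \ref{signature determines extended signatures as lemma} (the integral of a $C^K_c$ $1$-form, $K>p-1$, along a finite-dimensional rough path is a linear functional of its signature, by polynomial approximation), applied not to $x$ but to a finite-dimensional linear projection $\Phi\big(\pi^{(N)}(S_{\cdot})\big)$ of the truncated signature path, lifted to a rough path by Lemmas \ref{canonical lift} and \ref{linear map on signatures}; a bump-form $f_2\,\mathrm{d}f_1$ then separates any two injective paths in $G^{(*)}_{p.r.c.}$ with the same endpoints but different images, yielding Lemma \ref{existence and uniqueness of reduced path}. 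Note also the definitional shift that makes this work: the paper's reduced path is one whose \emph{signature path} $t\mapsto S(x)_{0,t}$ is injective, so existence is immediate by loop erasure and no Zorn-type maximality over excised backtracks is needed --- your ``no non-constant tree-like sub-excursion'' notion is the one Hambly--Lyons had to fight with and is much harder to control. Second, your tree $\tau_x$ and factorisation $x=\psi\circ\phi_x$ are asserted, not constructed. The paper builds $\tau$ explicitly as the set of signatures $\mathcal{G}_p$ with the metric $d(a,b)=\Vert a\Vert^p_{p-var}+\Vert b\Vert^p_{p-var}-2\Vert a\wedge b\Vert^p_{p-var}$ coming from the partially ordered set of injective signature paths (Lemma \ref{properties of partially ordered set}, Proposition \ref{tree metric}), and the continuity of $\phi(t)=S(x)_{0,t}$ into this tree is not automatic --- it requires the right-concatenation estimate of Lemma \ref{p-variation estimate}. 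Without these two ingredients your argument does not close.
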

\begin{rem}
If $S\left(x\right)=\mathbf{1}$, then we even know what a corresponding
$\tau$,$\phi$ and $\psi$ in Definition \ref{tree like path} are.
Here $\tau$ is the set of signatures equipped with a special metric
(see Theorem \ref{tree metric}), $\phi$ is the path $t\rightarrow S\left(x\right)_{0,t}$
and $\psi$ is the projection map $\pi^{\left(\lfloor p\rfloor\right)}$
if $x\in WG\Omega_{p}\left(V\right)$ (see Section 2.1 for notations). 
\end{rem}

\begin{rem}
Our proof does not make use of Hambly-Lyons' result and hence in the
case $p=1$, we have given a new and simple proof of Hambly-Lyons'
theorem in \cite{tree like}.
\end{rem}
Theorem \ref{main result} has the same implications for weakly geometric
rough paths as Hambly-Lyons' for bounded variation paths. As part
of the proof (see Lemma \ref{existence and uniqueness of reduced path}),
we will also show that the map $S$ is an isomorphism from the \textit{reduced
paths} (an analytic notion) to signatures (an algebraic structure).
T. Lyons and W. Xu \cite{LX14} have proposed inversion schemes for
recovering the reduced paths from signatures for $C^{1}$ paths. 

The more difficult implication in Theorem \ref{main result} is ``$S\left(x\right)=\mathbf{1}$
implies $x$ is tree-like''. Our definition of tree-like weakly geometric
rough paths gives rise to a natural strategy of proof, namely to first
show that the space of signatures of paths has a $\mathbb{R}$-tree
structure. A key Lemma in our approach is to identify signatures with
injective paths on signatures (see Section 4.3). This Lemma translates
the natural $\mathbb{R}$-tree structure of the latter to a $\mathbb{R}$-tree
structure for the former (see Section 4.4 and Section 4.5). To identify
signatures with injective paths on signatures, we use that for sufficiently
smooth 1-form $\alpha$ and $N\in\mathbb{N}$, if paths $x$ and $y$
have the same signature and $S_{N}$ denotes the truncated signature
at degree $N$, then 
\[
\int\alpha\left(\mathrm{d}S_{N}\left(x\right)_{0,t}\right)=\int\alpha\left(\mathrm{d}S_{N}\left(y\right)_{0,t}\right)
\]
(see Section 4.1 and 4.2). That we need to integrate against the truncated
signature path $S_{N}\left(x\right)$ as opposed to merely integrating
against the path $x$ itself is an important new idea.

\section{The signature of a path}

We will follow Hambly-Lyons' \cite{tree like} and view the map $S$
as taking value in the formal series of tensors so that
\[
S\left(x\right)=\mathbf{1}+\int_{0}^{T}\mathrm{d}x_{t_{1}}+\int_{0}^{T}\int_{0}^{t_{2}}\mathrm{d}x_{t_{1}}\otimes\mathrm{d}x_{t_{2}}+\ldots.
\]
The formal series of tensors $S\left(x\right)$ is called the \textit{signature
}of the path\textit{ $x$.} The signature has a natural homomorphism
property with respect the operations of concatenation and reversal.
More precisely, given a Lie group $G$ and continuous paths $x:\left[0,T_{1}\right]\rightarrow G$
and $y:\left[0,T_{2}\right]\rightarrow G$, we define the concatenation
product $\star$ by 
\[
x\star y\left(t\right)=\begin{cases}
x\left(t\right), & t\in\left[0,T_{1}\right];\\
x\left(T_{1}\right)y\left(0\right)^{-1}y\left(t-T_{1}\right), & t\in\left(T_{1},T_{1}+T_{2}\right]
\end{cases}
\]
and the reversal operation $\overleftarrow{\cdot}$ by 
\[
\overleftarrow{x}\left(t\right)=x\left(T_{1}-t\right),t\in\left[0,T_{1}\right].
\]
 K.T. Chen proved two fundamental algebraic properties of the map
$S$, which in the language of tensors can be stated as :
\begin{enumerate}
\item (K.T. Chen, \cite{Chen's first paper })The map $S$ satisfies 
\begin{equation}
S\left(x\star y\right)=S\left(x\right)\otimes\left(y\right);\; S\left(x\right)\otimes S\left(\overleftarrow{x}\right)=\mathbf{1}.\label{eq:homomorphism}
\end{equation}
In the rough path literature, the first identity in (\ref{eq:homomorphism})
is now known as \textit{Chen's identity}.
\item (K.T. Chen, \cite{Chen Lie series })The natural logarithm of $S$,
in the space of non-commutative formal power series, is a Lie series. 
\end{enumerate}
Hambly-Lyons' characterisation of the kernel of the map $S$ implies
that the tree-like relation $\sim$ , defined for continuous bounded
variation paths $x$ and $y$ by
\[
x\sim y\iff x\star\overleftarrow{y}\mbox{ is tree-like},
\]
is an equivalence relation. Moreover, the space of continuous bounded
variation paths in $\mathbb{R}^{d}$, quotiented by the relation $\sim$,
is a group with respect to the binary operation $\star$ and the inverse
$\overleftarrow{\cdot}$. Our main result Theorem \ref{main result}
implies that the same holds with bounded variation paths replaced
by weakly geometric rough paths.

\subsection{Setting for rough path theory}

We briefly recall the notations and settings in rough path theory,
which will be identical to that in Lyons-Qian's book \cite{Lyons Qian}. 

Let $V$ be a Banach space. Let $\otimes$ be a tensor product such
that the tensor powers of $V$,$\left(V^{\otimes n}:n\geq1\right)$,
is equipped with a family $\left(\left\Vert \cdot\right\Vert _{V^{\otimes n}}:n\geq1\right)$
of norms satisfying: 

1. for $m,n\in\mathbb{N}$ and for all $u\in V^{\otimes m}$ and $v\in V^{\otimes n}$,
\begin{equation}
\left\Vert u\otimes v\right\Vert _{V^{\otimes\left(n+m\right)}}\leq\left\Vert u\right\Vert _{V^{\otimes m}}\left\Vert v\right\Vert _{V^{\otimes n}};\label{eq:admissible 1}
\end{equation}

2. for any permutation $\sigma$ of $\left\{ 1,\ldots,n\right\} $,
\begin{equation}
\left\Vert v_{1}\otimes\ldots\otimes v_{n}\right\Vert _{V^{\otimes n}}=\left\Vert v_{\sigma\left(1\right)}\otimes\ldots\otimes v_{\sigma\left(n\right)}\right\Vert _{V^{\otimes n}};\label{eq:admissible 2}
\end{equation}

3. for any bounded linear functionals $f$ on $V^{\otimes m}$ and
$g$ on $V^{\otimes n}$, there exists a unique bounded linear functional,
denoted as $f\otimes g$, on $V^{\otimes\left(m+n\right)}$ such that
for all $u\in V^{\otimes m}$ and $v\in V^{\otimes n}$, 
\begin{equation}
f\otimes g\left(u\otimes v\right)=f\left(u\right)g\left(v\right).\label{eq:tensor condition 3}
\end{equation}
A family of tensor norms satisfying conditions 1. and 2. are called
a family of admissible tensor norms (see Definition 1.25 in \cite{lyons st flours}).
By convention, we define $V^{\otimes0}$ by $\mathbb{R}$. The projective
tensor product $\left\Vert \cdot\right\Vert _{\vee}$, defined for
$v\in V^{\otimes n}$ by 
\[
\left\Vert v\right\Vert _{\vee}=\inf\left\{ \sum_{k}\left|v_{1}^{k}\right|\ldots\left|v_{n}^{k}\right|:v=\sum_{k}v_{1}^{k}\otimes\ldots\otimes v_{n}^{k},\; v_{i}^{k}\in V\right\} ,
\]
satisfies conditions 1.-3 (Section 5.6.1 in \cite{Lyons Qian}). We
will use the shorthand $\left\Vert \cdot\right\Vert $ to denote $\left\Vert \cdot\right\Vert _{V^{\otimes i}}$
and $ab$ to denote $a\otimes b$ where there is no confusion. 

Let $T\left(\left(V\right)\right)$ be the formal series of tensors
(Definition 2.4 \cite{lyons st flours}) and $T^{\left(n\right)}\left(V\right)$
be the truncated tensor algebra up to degree $n$ (Definition 2.5
\cite{lyons st flours}). Let $\pi_{n}$ and $\pi^{\left(n\right)}$
denote, respectively, the natural projection map from $T\left(\left(V\right)\right)$
onto $V^{\otimes n}$ and $T^{\left(n\right)}\left(V\right)$. Let
$\mathcal{L}\left(\left(V\right)\right)$ denote the Lie formal series
over $V$ (Definition 2.2 \cite{lyons st flours}). Let $\tilde{T}\left(\left(V\right)\right)$
and $\tilde{T}^{\left(n\right)}\left(V\right)$ denote the subspaces
of $T\left(\left(V\right)\right)$ and $T^{\left(n\right)}\left(V\right)$
respectively such that $\pi_{0}\left(a\right)=1$ for all $a\in\tilde{T}\left(\left(V\right)\right)$
or $\tilde{T}^{\left(n\right)}\left(V\right)$. Let $G^{\left(*\right)}=\exp\left(\mathcal{L}\left(\left(V\right)\right)\right)$
be the space of group-like elements (p37 \cite{lyons st flours})
and $G^{\left(n\right)}=\pi^{\left(n\right)}\left(G^{\left(*\right)}\right)$
denote the free nilpotent Lie group of step $n$ (p37 \cite{lyons st flours}).
We will equip $G^{\left(n\right)}$ with the metric 
\[
d\left(a,b\right)=\max_{i\in\left\{ 1,\ldots,n\right\} }\left\Vert \pi_{i}\left(a^{-1}b\right)\right\Vert ^{\frac{1}{i}}.
\]

\begin{defn}
\label{Girc}Let $G_{p.r.c}^{\left(*\right)}$ (p.r.c. for positive
radius of convergence) denote respectively the element $a$ in $G^{\left(*\right)}$
such that 
\[
\max_{i\in\mathbb{N}}\left\Vert \pi_{i}\left(a\right)\right\Vert ^{\frac{1}{i}}<\infty.
\]

\end{defn}
We will equip $G_{p.r.c.}^{\left(*\right)}$ with the metric 
\[
d\left(a,b\right)=\max_{i\in\mathbb{N}}\left\Vert \pi_{i}\left(a^{-1}b\right)\right\Vert ^{\frac{1}{i}}
\]
 (see Lemma 1.1 in \cite{infinite dimensional note} for a proof of
the symmetric property of $d$). Let $E$ be a metric space. We say
a continuous function $x:\left[0,T\right]\rightarrow E$ has finite
$p$-variation if 
\[
\left\Vert x\right\Vert _{p-var}=\sup_{\mathcal{P}}\left(\sum_{t_{i}\in\mathcal{P}}d\left(x_{t_{i}},x_{t_{i+1}}\right)^{p}\right)^{\frac{1}{p}}<\infty
\]
where the supremum is taken over all partitions $\mathcal{P}$ of
$\left[0,T\right]$. 
\begin{defn}
\label{rough path definition}The space of weakly geometric rough
paths, $WG\Omega_{p}\left(V\right)$, is the set of all continuous
functions from a compact interval $\left[0,T\right]$ to $G^{\left(\lfloor p\rfloor\right)}$
with finite $p$-variation.
\end{defn}
For $x,y\in WG\Omega_{p}\left(V\right)$ such that $x_{0}=y_{0}$,
we define the $p$-variation metric 
\[
d_{p-var}\left(x,y\right)=\max_{1\leq i\leq\lfloor p\rfloor}\sup_{\mathcal{P}}\left(\sum_{t_{j}\in\mathcal{P}}\left\Vert \pi_{i}\left(x_{t_{j}}^{-1}x_{t_{j+1}}-y_{t_{j}}^{-1}y_{t_{j+1}}\right)\right\Vert ^{\frac{p}{i}}\right)^{\frac{i}{p}}.
\]
We will use $\mathbf{1}$ to denote the identity element with respect
to $\otimes$ in $T\left(\left(V\right)\right)$. 
\begin{prop}
\label{extension theorem}(Extension Theorem, Theorem 2.2.1 \cite{MR1654527}
and Corollary 3.9 in \cite{Cass infinite dimensional rp}) Let $x\in WG\Omega_{p}\left(V\right)$.
There exists a unique continuous path $S\left(x\right)_{0,\cdot}:\left[0,T\right]\rightarrow G_{p.r.c}^{\left(*\right)}$
with finite $p$-variation such that $S\left(x\right)_{0,0}=\mathbf{1}$
and $\pi^{\left(\lfloor p\rfloor\right)}\left(S\left(x\right)_{0,t}\right)=x_{0}^{-1}x_{t}$.
We will call $S\left(x\right)_{0,T}$ the signature of $x$. 
\end{prop}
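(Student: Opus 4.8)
The plan is to run Lyons' extension procedure: to construct the higher tensor components of $S(x)_{0,\cdot}$ one degree at a time. Write $n=\lfloor p\rfloor$ and set $X_{s,t}:=x_{s}^{-1}x_{t}\in G^{(n)}$, an element of $T^{(n)}(V)$ which by the group structure of $G^{(n)}$ is multiplicative, $X_{s,u}\otimes X_{u,t}=X_{s,t}$ in $T^{(n)}(V)$. Finite $p$-variation of $x$ furnishes a control $\omega$ (a continuous, superadditive function on $\{0\le s\le t\le T\}$ vanishing on the diagonal), for instance $\omega(s,t)=\|x\|_{p-var;[s,t]}^{p}$, for which $\|\pi_{i}(X_{s,t})\|\le\omega(s,t)^{i/p}$ for $1\le i\le n$. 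I would then show, by induction on $m\ge n+1$, that $X$ extends to a multiplicative functional $(\mathbf 1,X^{1}_{s,t},\dots,X^{m}_{s,t})$ in $T^{(m)}(V)$ which is group-like and obeys $\|X^{m}_{s,t}\|\le\beta_{m}\,\omega(s,t)^{m/p}$, and finally set $S(x)_{0,t}:=(\mathbf 1,X^{1}_{0,t},X^{2}_{0,t},\dots)$, whose defining property $\pi^{(n)}(S(x)_{0,t})=X_{0,t}=x_{0}^{-1}x_{t}$ is built in by construction.

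The extension step, which I expect to be the main obstacle, runs as follows. Given the functional in $T^{(m-1)}(V)$, let $\widetilde X_{s,t}\in T^{(m)}(V)$ agree with $X_{s,t}$ in degrees $\le m-1$ and vanish in degree $m$, and for a partition $D=\{s=u_{0}<\cdots<u_{r}=t\}$ of $[s,t]$ put $X^{D}_{s,t}=\widetilde X_{u_{0},u_{1}}\otimes\cdots\otimes\widetilde X_{u_{r-1},u_{r}}$. Deleting an interior point $u_{j}$ changes $\pi_{m}(X^{D}_{s,t})$ by exactly $\sum_{a+b=m,\,1\le a,b\le m-1}\pi_{a}(X_{u_{j-1},u_{j}})\otimes\pi_{b}(X_{u_{j},u_{j+1}})$ — the lower degrees cancel by multiplicativity in $T^{(m-1)}(V)$, and the outer factors contribute only through their degree-$0$ part $\mathbf 1$ — which by (\ref{eq:admissible 1}) and superadditivity of $\omega$ has norm at most $C_{m}\,\omega(u_{j-1},u_{j+1})^{m/p}$. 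Since any partition into $r$ pieces has an interior point with $\omega(u_{j-1},u_{j+1})\le\frac{2}{r-1}\omega(s,t)$, and $(n+1)/p>1$ makes $\sum_{k\ge1}k^{-m/p}<\infty$, the classical coarsening argument shows $\pi_{m}(X^{D}_{s,t})$ is Cauchy as the mesh of $D$ shrinks; its limit $X^{m}_{s,t}$ then satisfies the stated bound (take $D=\{s,t\}$) and makes the augmented family multiplicative in $T^{(m)}(V)$. Group-likeness passes to the limit because group-like elements form a closed subset of $T^{(m)}(V)$ that is stable under $\otimes$, and $X$ is group-like in degrees $\le n$ since $x$ is weakly geometric. (If needed, one first subdivides $[0,T]$ into finitely many subintervals on which $\omega$ is small and glues the extensions via Chen's identity.)

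To conclude existence I would track the constants through this induction: the neo-classical inequality (see \cite{MR1654527,lyons st flours}) upgrades the bound to the factorial form $\|\pi_{m}(X_{s,t})\|\le\omega(s,t)^{m/p}/(\beta\,(m/p)!)$ for a fixed $\beta>0$ and all $m$. Hence $\max_{m}\|\pi_{m}(S(x)_{0,t})\|^{1/m}\le\omega(0,t)^{1/p}\sup_{m}(\beta\,(m/p)!)^{-1/m}<\infty$, so $S(x)_{0,t}\in G^{(*)}_{p.r.c}$; and writing $S(x)_{s,t}:=S(x)_{0,s}^{-1}\otimes S(x)_{0,t}$, the same uniform estimate gives $d(S(x)_{0,s},S(x)_{0,t})=\max_{m}\|\pi_{m}(S(x)_{s,t})\|^{1/m}\le C\,\omega(s,t)^{1/p}$, so that $t\mapsto S(x)_{0,t}$ is continuous and of finite $p$-variation. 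The delicate point is precisely this quantitative control of the $\beta_{m}$: without the factorial decay one obtains only a formal series, not an element of $G^{(*)}_{p.r.c}$.

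For uniqueness, suppose $Y:[0,T]\to G^{(*)}_{p.r.c}$ is continuous of finite $p$-variation with $Y_{0}=\mathbf 1$ and $\pi^{(n)}(Y_{t})=x_{0}^{-1}x_{t}$. I would prove $\pi^{(m)}(Y_{t})=\pi^{(m)}(S(x)_{0,t})$ for all $t$ by induction on $m\ge n$, the base case $m=n$ being the hypothesis. Assuming it for $m-1$, set $Z_{t}=\pi_{m}(Y_{t})-\pi_{m}(S(x)_{0,t})\in V^{\otimes m}$; expanding $\pi_{m}$ in $Y_{t}=Y_{s}\otimes Y_{s,t}$ (with $Y_{s,t}:=Y_{s}^{-1}\otimes Y_{t}$) and in $S(x)_{0,t}=S(x)_{0,s}\otimes S(x)_{s,t}$, every cross term of bidegree $(a,b)$ with $1\le a,b\le m-1$ cancels because the corresponding lower-degree parts agree, leaving $Z_{t}-Z_{s}=\pi_{m}(Y_{s,t})-\pi_{m}(S(x)_{s,t})$. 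Both terms on the right are bounded by $\widetilde\omega(s,t)^{m/p}$ for a control $\widetilde\omega$ dominating the $p$-variations of $Y$ and of $S(x)$, with $m/p>1$; a continuous $Z$ whose increments satisfy such an estimate is constant, and $Z_{0}=0$, so $Z\equiv0$. This closes the induction and forces $Y=S(x)_{0,\cdot}$.
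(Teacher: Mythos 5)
The paper does not prove this proposition: it is quoted as Theorem 2.2.1 of \cite{MR1654527} and Corollary 3.9 of \cite{Cass infinite dimensional rp}, so there is no internal proof to compare against. Your argument is a faithful reconstruction of the proof in those references: the level-by-level extension via compensated partition sums and the coarsening argument (dropping the interior point of smallest $\omega$-mass, using $m/p>1$ for summability), the neo-classical inequality to obtain factorial decay and hence membership in $G_{p.r.c}^{\left(*\right)}$ together with the estimate $d\left(S\left(x\right)_{0,s},S\left(x\right)_{0,t}\right)\leq C\,\omega\left(s,t\right)^{1/p}$ giving continuity and finite $p$-variation, and the standard uniqueness induction in which the difference of the level-$m$ components has increments $O\big(\tilde{\omega}\left(s,t\right)^{m/p}\big)$ with $m/p>1$ and is therefore constant. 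All of that is sound.

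The one step that does not work as written is the group-likeness of the extension. You argue that group-likeness ``passes to the limit because group-like elements form a closed subset stable under $\otimes$'', but the approximants $X_{s,t}^{D}=\widetilde{X}_{u_{0},u_{1}}\otimes\cdots\otimes\widetilde{X}_{u_{r-1},u_{r}}$ are products of the elements $\widetilde{X}_{u,v}$, which are truncations of group-like elements padded with $0$ in degree $m$ and are therefore \emph{not} group-like in $T^{\left(m\right)}\left(V\right)$: already for $\exp\left(v\right)$, replacing the degree-two component $\tfrac{1}{2}v\otimes v$ by $0$ violates the shuffle relation $\langle e_{i},g\rangle\langle e_{j},g\rangle=\langle e_{i}e_{j}+e_{j}e_{i},g\rangle$ unless $v=0$. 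So closedness of the set of group-like elements tells you nothing about the limit. The standard repair is to verify the shuffle identities at level $m$ directly: for words $I,J$ with $\left|I\right|,\left|J\right|\geq1$ and $\left|I\right|+\left|J\right|=m$, the products $\left(s,t\right)\mapsto\langle e_{I},X_{s,t}\rangle\langle e_{J},X_{s,t}\rangle$ depend only on levels $\leq m-1$ and assemble into a second multiplicative extension of $X$ to level $m$ satisfying the same $\omega\left(s,t\right)^{m/p}$ bound; the uniqueness you have already proved then forces it to coincide with $X_{s,t}^{m}$, which is exactly the shuffle relation at level $m$. (Alternatively one approximates by bounded variation paths in finite dimensions; the Banach space case is precisely the content of the cited Corollary 3.9 of \cite{Cass infinite dimensional rp}.) With that step repaired your proof is complete.
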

We will often omit the subscript and use the shorthand $S\left(x\right)$
for the signature of $x$. We will also use $S_{N}\left(x\right)$
to denote $\pi^{\left(N\right)}\left(S\left(x\right)\right)$.

\section{Tree-like paths have trivial signature}

\subsection{Preliminary definitions}
\begin{defn}
Let $V$ be a topological space and let $x:\left[a,b\right]\rightarrow V$,
$\tilde{x}:\left[c,d\right]\rightarrow V$ be continuous paths taking
value in $V$. We say $x$ is a\textit{ reparametrisation} of $\tilde{x}$
if there exists a homeomorphism $\sigma$ of $\left[c,d\right]$ onto
$\left[a,b\right]$ such that $x_{\sigma\left(t\right)}=\tilde{x}_{t}$
for all $t$. 
\end{defn}

\begin{defn}
Let $\tau$ be a $\mathbb{R}$-tree and $a,b\in\tau$, we will use
the notation $\left[a,b\right]$ to denote the unique (up to reparametrisation)
injective continuous function $x:\left[0,T\right]\rightarrow\tau$
on some compact interval $\left[0,T\right]$ such that $x_{0}=a$
and $x_{T}=b$.
\end{defn}

\begin{defn}
Let $V$ be a topological space. A \textit{rooted loop} in $V$ is
a continuous function $x:\left[s,t\right]\rightarrow V$ ($s\leq t$)
such that $x_{s}=x_{t}$. The element $x_{s}$ is known as the \textit{root}
of $x$.
\end{defn}

\begin{defn}
\label{partial order}Let $\tau$ be a $\mathbb{R}$-tree and $r\in\tau$.
We may define a partial order $\preceq$ with respect to $r$ on $\tau$
by 
\begin{equation}
a\preceq b\iff\left[r,a\right]\subseteq\left[r,b\right].\label{eq:partial order}
\end{equation}

\end{defn}

\subsection{The central case }
\begin{lem}
\label{piecewise geodesic tree-like}Let $\tau$ be a $\mathbb{R}$-tree
and $\phi:\left[0,T\right]\rightarrow\tau$ be a rooted loop. Suppose
there exists a partition $\mathcal{P}=\left(t_{0},\ldots,t_{n}\right)$
of $\left[0,T\right]$ such that if $t_{i}$, $t_{i+1}$ are adjacent
points in $\mathcal{P}$, then $\phi|_{\left[t_{i},t_{i+1}\right]}$
is (not necessarily strictly) monotone with respect to the root of
$\phi$. If $\psi:\tau\rightarrow G^{\lfloor p\rfloor}$ is such that
$\psi\circ\phi\in WG\Omega_{p}\left(V\right)$, then \textup{$\psi\circ\phi$}
has trivial signature. \end{lem}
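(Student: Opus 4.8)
The plan is to induct on the number $n$ of intervals in the partition $\mathcal{P}$ and to reduce, at each step, to the homomorphism property (\ref{eq:homomorphism}) together with the cancellation that occurs when a path retraces a monotone excursion along a tree. Write $r$ for the root of $\phi$ and equip $\tau$ with the partial order $\preceq$ with respect to $r$ from Definition \ref{partial order}. The base case $n=1$ is immediate: a single rooted loop $\phi|_{[0,T]}$ that is monotone with respect to $r$ must be constant (monotonicity plus $\phi_0=\phi_T$ forces $[r,\phi_t]$ to be the same segment for all $t$, and since $\phi$ is injective along a geodesic it cannot revisit a point unless it is constant), so $\psi\circ\phi$ is a constant path and has signature $\mathbf{1}$.

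For the inductive step I would locate a point of $\mathcal{P}$ at which $\phi$ attains a $\preceq$-maximal value, say $\phi_{t_k}$, among the breakpoints; since $\phi$ is a loop and each piece is monotone with respect to $r$, the two adjacent pieces $\phi|_{[t_{k-1},t_k]}$ and $\phi|_{[t_k,t_{k+1}]}$ run, respectively, "up toward" $\phi_{t_k}$ and "down away from" it along geodesics in $\tau$. The key geometric observation is that in an $\mathbb{R}$-tree the geodesic from $r$ to $\phi_{t_k}$ is unique, so these two pieces traverse overlapping segments; after a reparametrisation one can split $[t_{k-1},t_{k+1}]$ into a sub-loop based at $\phi_{t_{k-1}}\wedge\phi_{t_{k+1}}$ (the confluent of the two points) that is "monotone up then exactly monotone down along the same segment", concatenated with a genuinely shorter monotone piece from $\phi_{t_{k-1}}$ to $\phi_{t_{k+1}}$. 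Applying $\psi$ and using Chen's identity (\ref{eq:homomorphism}), the signature of $\psi\circ\phi|_{[t_{k-1},t_{k+1}]}$ factors as $S(\gamma)\otimes S(\beta\star\overleftarrow{\beta})\otimes S(\delta)$ for appropriate sub-paths, and the middle out-and-back term collapses to $\mathbf{1}$ by the second identity in (\ref{eq:homomorphism}). What remains is a rooted loop on $\tau$ with a partition into $n-1$ monotone pieces, to which the inductive hypothesis applies. Some care is needed to handle the degenerate cases where $t_k$ is an endpoint of $[0,T]$ or where $\phi_{t_{k-1}}$ and $\phi_{t_{k+1}}$ are not comparable, but in every case the maximal breakpoint supplies a cancellable out-and-back segment.

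The main obstacle is making the "split into out-and-back plus shorter loop" step rigorous at the level of weakly geometric rough paths rather than bounded variation paths: we must check that each sub-path produced is again in $WG\Omega_p(V)$ (so that its signature is defined via Proposition \ref{extension theorem}), that the factorisation is legitimate for the full signature in $G^{(*)}_{p.r.c.}$ and not merely for a truncation, and that reparametrising $\phi$ so as to isolate the out-and-back piece does not disturb these properties. The factorisation itself is clean once one knows $S$ is a genuine homomorphism on $WG\Omega_p(V)$ and that $S(\beta)\otimes S(\overleftarrow{\beta})=\mathbf{1}$, both of which follow from (\ref{eq:homomorphism}) and the extension theorem; the delicate part is the bookkeeping that guarantees the decomposed pieces inherit finite $p$-variation and that the number of monotone pieces strictly decreases, so that the induction terminates with a constant loop of signature $\mathbf{1}$.
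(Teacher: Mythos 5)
Your proposal is correct and follows essentially the same route as the paper: induction on the number of monotone pieces, selecting a $\preceq$-maximal breakpoint $\phi_{t_k}$, cancelling the resulting out-and-back segment via the homomorphism property (\ref{eq:homomorphism}), and applying the inductive hypothesis to the loop with one fewer breakpoint. The only cosmetic difference is that the turning point of the cancellable excursion is the $\preceq$-larger of $\phi_{t_{k-1}}$ and $\phi_{t_{k+1}}$ (which are always comparable, since $\{a:a\preceq\phi_{t_k}\}$ is totally ordered) rather than their confluent $\phi_{t_{k-1}}\wedge\phi_{t_{k+1}}$, exactly as in the paper's choice of the intermediate time $t'$.
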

\begin{proof}
We will prove by induction on $\left|\mathcal{P}\right|$. In the
case $\left|\mathcal{P}\right|=2$, as $\phi|_{\left[0,T\right]}$
is monotonic and $\phi\left(0\right)=\phi\left(T\right)$, $\phi$
is forced to be constant. In particular, $S\left(\psi\circ\phi\right)=\mathbf{1}$.
For the induction step, let $\tau_{\max}\in\phi\left(\mathcal{P}\right)$
be such that there does not exists $s\in\phi\left(\mathcal{P}\right)$,
$s\succ\tau_{\max}$. Let $t_{i}\in\mathcal{P}$ be such that $\phi\left(t_{i}\right)=\tau_{\max}$.
Since $\phi\left(t_{i-1}\right)\preceq\phi\left(t_{i}\right)$, $\phi\left(t_{i+1}\right)\preceq\phi\left(t_{i}\right)$
and the set $\left\{ t|t\preceq\phi\left(t_{i}\right)\right\} $ is
totally ordered, we may assume, without loss of generality, that $\phi\left(t_{i-1}\right)\preceq\phi\left(t_{i+1}\right)$.
Let $t^{\prime}\in\left[t_{i-1},t_{i}\right]$ be such that $\phi\left(t^{\prime}\right)=\phi\left(t_{i+1}\right)$.
Then $\phi|_{\left[0,t^{\prime}\right]\cup\left[t_{i+1},T\right]}$
is piecewise monotone with respect to the partition $\mathcal{P}\backslash\left\{ t_{i}\right\} $.
Therefore, by induction hypothesis, 
\begin{equation}
S\left(\psi\circ\phi|_{\left[0,t^{\prime}\right]\cup\left[t_{i+1},T\right]}\right)=\mathbf{1}.\label{eq:trivial signature without a point}
\end{equation}
As $\phi|_{\left[t^{\prime},t_{i}\right]}$ and $\phi|_{\left[t_{i},t_{i+1}\right]}$
are the unique injective curves connecting $\phi\left(t_{i}\right)$
and $\phi\left(t_{i+1}\right)$ with opposite orientation, by the
homomorphism property of signature (see Lemma\textbf{\textcolor{red}{{}
}}1.3 in \cite{infinite dimensional note}), 
\begin{eqnarray*}
 & S\left(\psi\circ\phi|_{\left[t^{\prime},t_{i}\right]}\right)\otimes S\left(\psi\circ\phi|_{\left[t_{i},t_{i+1}\right]}\right)=\mathbf{1} & .
\end{eqnarray*}
Hence $S\left(\psi\circ\phi|_{\left[t^{\prime},t_{i+1}\right]}\right)=\mathbf{1}$
which implies, by (\ref{eq:trivial signature without a point}) and
Chen's identity, that $S\left(\psi\circ\phi\right)=\mathbf{1}$. 
\end{proof}

\subsection{Reducing to the central case }

We will need the following result of general topology from \cite{erase loop},
which allows us to erase loops from a continuous path to obtain an
injective continuous path.
\begin{lem}
\label{Existence of simple}(R. Börger \cite{erase loop})Let $X$
be a Hausdorff space and let $\varphi:\left[0,1\right]\rightarrow X$
be continuous with $\varphi\left(0\right)\neq\varphi\left(1\right)$.
Then there exist a closed subset $A\subset\left[0,1\right]$, a continuous
and order-preserving map $q:\left[0,1\right]\rightarrow\left[0,1\right]$,
and an injective continuous map $\psi:\left[0,1\right]\rightarrow X$
with the following properties:

(i)$\psi\left(0\right)=\varphi\left(0\right)$,$\psi\left(1\right)=\varphi\left(1\right)$.

(ii)$\psi\circ q|_{A}=\varphi|_{A}$. 

(iii)$q|_{A}:A\rightarrow\left[0,1\right]$ is surjective. \end{lem}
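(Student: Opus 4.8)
The plan is to obtain $\psi$ as the arc that $\varphi$ sweeps out once all of its loops have been erased, with $q$ the monotone reparametrisation generated by the erasure and $A$ the set of parameters not interior to an erased loop. First I would reduce to the case that $X$ is compact and metrisable: since $X$ is Hausdorff, $\ker\varphi=\{(s,t)\in[0,1]^2:\varphi(s)=\varphi(t)\}$ is closed, so $\varphi$ factors as a homeomorphism $[0,1]/\ker\varphi\to\varphi([0,1])$, and $[0,1]/\ker\varphi$ is a Peano continuum; replacing $X$ by $\varphi([0,1])$ changes nothing in the statement.

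The whole problem then reduces to producing an upper semicontinuous monotone decomposition $\mathcal D$ of $[0,1]$ into points and closed subintervals with: (a) every nondegenerate block $[a,b]$ satisfies $\varphi(a)=\varphi(b)$; and (b), writing $A:=[0,1]\setminus\bigcup\{(a,b):[a,b]\text{ a nondegenerate block}\}$, any $s<t$ in $A$ with $\varphi(s)=\varphi(t)$ have $[s,t]\in\mathcal D$. Indeed, granting such a $\mathcal D$ everything else is soft. The quotient $[0,1]/\mathcal D$ is a nondegenerate compact connected metrisable locally connected space in which the only non-cut points are the images of $0$ and $1$, hence an arc; fix an order-preserving homeomorphism $[0,1]/\mathcal D\cong[0,1]$ and let $q$ be its composition with the quotient map, a continuous order-preserving surjection with $q(0)=0$ and $q(1)=1$. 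Set $\rho(t):=\max(A\cap[0,t])$; this non-decreasing map is constant on each block, and its only discontinuities are jumps across a nondegenerate block $[a,b]$, at which $\varphi(a)=\varphi(b)$ by (a), so $\varphi\circ\rho$ is continuous. Being constant on blocks, $\varphi\circ\rho$ descends through $q$ to a continuous map $\psi$ with $\psi\circ q=\varphi\circ\rho$. Then (i) holds because $0,1\in A$ (they cannot lie in the interior of a subinterval of $[0,1]$), so $\rho(0)=0$, $\rho(1)=1$ and $\psi(0)=\psi(q(0))=\varphi(0)$, $\psi(1)=\varphi(1)$; (ii) holds because $\rho=\mathrm{id}$ on $A$, so $\psi\circ q|_A=\varphi|_A$; (iii) holds because every block meets $A$ (at an endpoint), so $q(A)=q([0,1])=[0,1]$. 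Finally $\psi$ is injective: if $\psi(u)=\psi(u')$, choose $s,s'\in A$ with $q(s)=u$, $q(s')=u'$; then $\varphi(s)=\psi(u)=\psi(u')=\varphi(s')$, so by (b) the interval between $s$ and $s'$ is a block, whence $u=q(s)=q(s')=u'$.

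It remains to build $\mathcal D$, which is the real content. I would proceed by transfinite recursion from the decomposition into singletons: while (b) fails, choose $s<t$ in the current $A$ with $\varphi(s)=\varphi(t)$ and $[s,t]$ not yet a single block, and coarsen by replacing every block meeting $[s,t]$ with the single block $[\min(\text{block of }s),\max(\text{block of }t)]$; at limit stages pass to the decomposition whose blocks are the closures of the increasing unions of the blocks produced so far. (A similar outcome is reached by a Zorn's lemma argument on partial decompositions satisfying (a), ordered by coarsening.) A single coarsening step preserves (a) because $s$ and $t$ are endpoints of their blocks, and (a) survives limit stages because the block endpoints converge monotonically and $\varphi$ is continuous.

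The main obstacle — the step I expect to require genuine work — is to show that upper semicontinuity is preserved throughout the transfinite process (equivalently, that $[0,1]/\mathcal D$ stays Hausdorff at every stage, so that no block degenerates into a half-open interval or enlarges spuriously in a limit) and that the recursion actually stabilises at a decomposition satisfying (b). Verifying upper semicontinuity via Kuratowski limits of the blocks — which is where metrisability of $X$ is convenient — and controlling the length of the recursion are the technical heart of the argument; everything downstream of the construction of $\mathcal D$ is the routine point-set topology carried out above.
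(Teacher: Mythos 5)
First, a point of comparison: the paper offers no proof of this lemma at all --- it is quoted from B\"orger's note \cite{erase loop} and used as a black box --- so your proposal has to stand entirely on its own. The reduction to $X=\varphi([0,1])$ compact metrisable is correct, and everything downstream of the decomposition $\mathcal{D}$ --- the identification of $[0,1]/\mathcal{D}$ with an arc, the definitions of $q$, $\rho$ and $\psi$, and the verification of (i)--(iii) and of injectivity from your properties (a) and (b) --- is complete and sound (modulo the harmless slip that $\rho$ itself is not constant on a nondegenerate block $[a,b]$, since it jumps from $a$ to $b$ at the right endpoint; only $\varphi\circ\rho$ is constant there, which is all you use).

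The genuine gap is exactly where you yourself place it: the construction of $\mathcal{D}$, which you correctly call the real content, is sketched but not carried out. Two remarks on what is actually at stake. The worry about upper semicontinuity is a red herring: any partition of $[0,1]$ into points and pairwise disjoint closed subintervals is automatically upper semicontinuous, because pairwise disjoint subintervals have summable, hence eventually small, diameters, so an infinite sequence of distinct blocks accumulating on a block converges to a single point of its closure. The real difficulty is at limit stages (equivalently, in producing upper bounds of chains for your Zorn variant): the closures of the increasing unions of blocks need not be pairwise disjoint --- adjacent limit blocks can be half-open intervals sharing an endpoint, and the transitive merging needed to repair this can chain together infinitely many of them --- so ``the decomposition whose blocks are the closures of the increasing unions'' is not a partition as written. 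This is repairable (merge along the transitive closure of ``closures meet''; property (a) survives because the endpoint identifications pass to the limit by continuity of $\varphi$), and termination holds because each successor step strictly enlarges the open set $\bigcup(a,b)$ while a strictly increasing chain of open subsets of $[0,1]$ is countable; but none of this appears in your text, and it is precisely the content of B\"orger's lemma. As it stands, the proposal is an accurate reduction together with an honest admission that the core step is missing.
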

\begin{rem}
The Lemma holds with $\left[0,1\right]$ replaced by any interval
$\left[0,T\right]$, $T>0$. 
\end{rem}
\begin{proof} [Proof of the "tree-like paths have trivial signature" part of Theorem 2.1]Using
the notation in Definition \ref{tree like path}, let the functions
$\phi:\left[0,T\right]\rightarrow\tau$ and $\psi:\tau\rightarrow G^{\left(\lfloor p\rfloor\right)}$
be a factorisation for the tree-like path $x$. Let $\preceq$ be
the natural partial order (see Definition \ref{partial order}) with
respect to the root of $\phi$. For any $a,b\in\tau$, we define $a\wedge b$
to be the unique element of $\tau$ such that 
\begin{eqnarray*}
\left[\phi\left(0\right),a\wedge b\right] & = & \left[\phi\left(0\right),a\right]\cap\left[\phi\left(0\right),b\right]
\end{eqnarray*}
(see Lemma 2.3 in \cite{Chi01} for the existence of $a\wedge b$).
Let $\mathcal{P}$ be a partition of $\left[0,T\right]$. Let 
\[
B=\left\{ \phi\left(t_{i_{1}}\right)\wedge\ldots\wedge\phi\left(t_{i_{n}}\right):t_{i_{1}},\ldots,t_{i_{n}}\in\mathcal{P},\; n\leq\left|\mathcal{P}\right|\right\} .
\]
The set $B$ can be interpreted as the set of branched points in the
subtree of $\tau$ spanned by $\phi\left(\mathcal{P}\right)$. Note
that for any $b_{1},b_{2}\in B$ we have $b_{1}\wedge b_{2}\in B$.
Define a sequence $\left(s_{i}\right)$ by $s_{0}=0$, 
\[
s_{i+1}=\inf\left\{ v>s_{i}:\phi\left(v\right)\in B\backslash\left\{ \phi\left(s_{i}\right)\right\} \right\} .
\]
By the continuity of $\phi$ and the finiteness of $B$, $\left(s_{i}\right)$
is a finite sequence. Let $\mathcal{P}^{\prime}=\left(s_{i}\right)$.
For each $s_{i}$, we construct an injective path $x_{\cdot}^{s_{i},s_{i+1}}$
in $\tau$ in the following way: if we apply Lemma \ref{Existence of simple}
to erase loops from the path 
\begin{equation}
\varphi\left(t\right)=\left(\phi|_{\left[s_{i},s_{i+1}\right]},\psi\circ\phi|_{\left[s_{i},s_{i+1}\right]}\right)\in\tau\times V,\label{eq:to erase}
\end{equation}
we obtain a continuous injective path $\eta$ in $\tau\times V$.
The path $x_{\cdot}^{s_{i},s_{i+1}}$ is the projection of $\eta$
onto $\tau$. Define $\phi^{\prime}:\left[0,T\right]\rightarrow\tau$
so that for each $s_{i}\in\mathcal{P}^{\prime}$ 
\begin{eqnarray*}
\phi^{\prime}\left(t\right) & = & \phi\left(t\right),\; t\in\mathcal{P}^{\prime}\\
 & = & x_{t}^{s_{i},s_{i+1}},\; s_{i}\leq t\leq s_{i+1}
\end{eqnarray*}
and let $x^{\mathcal{P}^{\prime}}:=\psi\circ\phi^{\prime}$. As the
self-intersection of $\varphi$ in (\ref{eq:to erase}) coincides
with the self-intersection with of $\phi|_{\left[s_{i},s_{i+1}\right]}$,
the path $x^{\mathcal{P}^{\prime}}|_{\left[s_{i},s_{i+1}\right]}$
is a natural projection of $\eta$ (see line below \ref{eq:to erase})
onto $V$. Therefore, $x^{\mathcal{P}^{\prime}}|_{\left[s_{i},s_{i+1}\right]}$
is continuous for all $i$. 

We now show that if $s_{i-1},s_{i}$ are adjacent points in $\mathcal{P}^{\prime}$,
then either $\phi\left(s_{i-1}\right)\preceq\phi\left(s_{i}\right)$
or $\phi\left(s_{i}\right)\preceq\phi\left(s_{i-1}\right)$. As stated
in Lemma 2.1 in \cite{Chi01}, the image of the continuous path $\phi|_{\left[s_{i},s_{i+1}\right]}$
in a $\mathbb{R}$-tree must contain $\left[\phi\left(s_{i}\right),\phi\left(s_{i+1}\right)\right]$
and in particular the element $\phi\left(s_{i}\right)\wedge\phi\left(s_{i-1}\right)$.
As $\phi\left(s_{i}\right)\wedge\phi\left(s_{i-1}\right)\in B$, by
the construction of the sequence $\left(s_{i}\right)$, $\phi\left(s_{i}\right)\wedge\phi\left(s_{i-1}\right)$
must be either equal to $\phi\left(s_{i}\right)$ or $\phi\left(s_{i+1}\right)$.

In particular, $\phi^{\prime}$ is piecewise monotone. Since the $p$-variation
of $x^{\mathcal{P}^{\prime}}$ is dominated by the $p$-variation
of $x$, $x^{\mathcal{P}^{\prime}}\in WG\Omega_{p}\left(V\right)$.
Therefore, $x^{\mathcal{P}^{\prime}}$ satisfies the assumptions of
the central case, Lemma \ref{piecewise geodesic tree-like}, and hence
has trivial signature. Let $\mathcal{P}_{n}$ be a sequence of partitions
such that $\left|\mathcal{P}_{n}\right|\rightarrow0$ as $n\rightarrow\infty$.
Let $\mathcal{P}_{n}^{\prime}$ be the corresponding sequence constructed
as above. Trivially, we have $\left\Vert x^{\mathcal{P}_{n}^{\prime}}\right\Vert _{p-var}\leq\left\Vert x\right\Vert _{p-var}$.
As $\left|\mathcal{P}_{n}^{\prime}\right|\rightarrow0$, $x^{\mathcal{P}_{n}^{\prime}}$
converges uniformly to $x$. By Lemma 1.5 in \cite{infinite dimensional note},
there exists a subsequence $x^{\mathcal{P}_{n_{k}}^{\prime}}$ such
that $S\left(x^{\mathcal{P}_{n_{k}}^{\prime}}\right)\rightarrow S\left(x\right)$
as $k\rightarrow\infty$. As $S\left(x^{\mathcal{P}_{n_{k}}^{\prime}}\right)=\mathbf{1}$
for all $k$, the result follows. \end{proof}

\section{Paths with trivial signature are tree-like}

\subsection{A special functional on signature}

The following is a key ingredient in the proof of our main result.
It establishes a relation between a weakly geometric rough path $x\in WG\Omega_{p}\left(V\right)$
and its signature other than the one given in the Extension Theorem
\ref{extension theorem}. 
\begin{lem}
\label{signature determines extended signatures as lemma}(Integration
of $1$-form is a functional of signature)Let $x,y\in WG\Omega_{p}\left(\mathbb{R}^{d}\right)$,
$\pi_{1}\left(x_{0}\right)=\pi_{1}\left(y_{0}\right)=0$ and $S\left(x\right)=S\left(y\right)$.
Then for any $N\in\mathbb{N}$ and any $C_{c}^{K}$ 1-form $\psi$
on $\mathbb{R}^{d}$ with $K>p-1$, we have 
\begin{equation}
\int\psi\left(\mathrm{d}x_{u}\right)=\int\psi\left(\mathrm{d}y_{u}\right).\label{signature determines extended signatures}
\end{equation}
\end{lem}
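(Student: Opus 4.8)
The plan is to reduce the statement to a fact about linear functionals on the signature. The key observation is that integrating a sufficiently smooth 1-form $\psi$ along $x$ can be realized as a limit of linear functionals applied to truncated signatures, so that if $x$ and $y$ have the same signature, the integrals must agree. Concretely, I would Taylor-expand the 1-form. Write $\psi = \sum_{i=1}^d \psi_i(z)\,dz^i$ where each $\psi_i \in C_c^K(\mathbb{R}^d)$. Fix a reference point; by translation-invariance of the integral $\int\psi(dx_u)$ under shifting $\psi$ we may center things conveniently, but more importantly I would localize: since $\psi$ has compact support, the path $x$ (which is continuous, hence its level-one projection has bounded image) only sees $\psi$ on a bounded region, so we may work with a polynomial approximation of $\psi$ of degree $N-1$ on that region, with an error controlled using the $C^K$-norm and $K > p-1$.

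**Next I would** handle the two pieces separately. For a 1-form whose coefficients are polynomials of degree $\le N-1$ in the coordinates, the iterated-integral $\int \psi(dx_u) = \sum_i \int \psi_i(x_u)\,dx_u^i$ is, by expanding $\psi_i(x_u)$ as a polynomial in the increments $x_u - x_0$ and using the shuffle/iterated-integral identities, exactly a fixed linear functional $\Lambda_{\psi,N}$ applied to the truncated signature $S_N(x)$ of degree $N$. (Here one uses that a product of iterated integrals up to total degree $N$ is again a linear combination of iterated integrals of degree $\le N$, i.e. the shuffle product, together with the Extension Theorem \ref{extension theorem} which guarantees $S_N(x)$ is well-defined for $x\in WG\Omega_p$.) Since $S(x)=S(y)$ forces $S_N(x)=S_N(y)$ for every $N$, we get $\Lambda_{\psi,N}(S_N(x)) = \Lambda_{\psi,N}(S_N(y))$. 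For the error piece, I would use the standard rough-path estimate: if $\tilde\psi_N$ is the difference between $\psi$ and its degree-$(N-1)$ Taylor polynomial on the relevant compact set, then $\int \tilde\psi_N(dx_u)$ is bounded by a constant (depending on $\|x\|_{p\text{-}var}$ and the diameter of the image) times $\|\tilde\psi_N\|_{C^{\lfloor p\rfloor}}$ times a factor that, because the Taylor remainder of order $N$ beats any fixed smoothness once $N$ is large relative to $K$, can be made to vanish — or, more cleanly, one picks $N$ from the start equal to (a function of) the degree needed and simply observes the identity is exact for polynomial 1-forms and then passes to a density/approximation argument in $C_c^K$ with $K > p-1$.

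**The technical heart** is making the ``integration against a polynomial 1-form is a linear functional of the truncated signature'' step precise in the weakly geometric rough path setting, i.e.\ justifying that $\int_0^T \psi_i(x_u - x_0)\,dx_u^i$ — where $\psi_i$ is a monomial of degree $m$ — equals a specific pairing with $\pi^{(m+1)}(S(x))$. This is where one must invoke that $S(x)$ is a genuine multiplicative functional / group-like element and that the associated rough integral obeys the same algebraic (shuffle) relations as in the smooth case; the cleanest route is to note the map $z \mapsto (z, S_N(x)_{0,\cdot})$ can itself be read off from $S(x)$, and that iterated integrals of the rough path against its own coordinates reproduce higher signature terms. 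I expect the main obstacle to be the uniform control of the Taylor remainder: one needs $K > p - 1$ precisely so that the error term $\int \tilde\psi_N(dx_u)$, estimated via the rough-path Young/Lyons estimate with the $C^{\lfloor p\rfloor}$-norm of the remainder, tends to $0$ as $N \to \infty$ — this is the only place the hypothesis $K>p-1$ is used, and it must be checked that the remainder's relevant derivative norms decay fast enough against the growth of $\|x\|_{p\text{-}var}^N$, which works because we only ever evaluate on the fixed compact image of $x$.
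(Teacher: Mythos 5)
Your proposal follows essentially the same route as the paper: first show that integration of a polynomial $1$-form along $x$ is a fixed linear functional of the signature (via the shuffle identity expressing $\psi_i(\pi_1(x_u))$ as a linear functional of $S(x)_{0,u}$, together with $S(x)=\mathbf{1}+\int_0^T S(x)_{0,t}\otimes\mathrm{d}x_t$), then pass to general $C_c^K$ $1$-forms by polynomial approximation, with $K>p-1$ guaranteeing continuity of $\psi\mapsto\int\psi(\mathrm{d}z)$ in the Lip($K$)-norm. The only caution is that your first variant, controlling a Taylor remainder as $N\to\infty$, is the shakier of your two options (Taylor remainders of a $C^K$ function need not decay in the required derivative norms as the degree grows); the ``cleaner'' fallback you state at the end --- exactness for polynomial $1$-forms plus a density argument in the Lip($K$)-norm --- is precisely what the paper does, invoking simultaneous polynomial approximation from \cite{BBL} and Theorem 10.50 of \cite{FV10}.
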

\begin{proof}
Suppose that 
\[
\psi\left(\mathrm{d}x\right)=\sum_{i=1}^{d}\psi_{i}\left(x\right)\mathrm{d}x^{i}.
\]
Assume for now that $\left\{ \psi_{i}\right\} $ were all polynomial
functions. Then each $\psi_{i}\left(\pi_{1}\left(x_{u}\right)\right)$
can be expressed as a linear functional (independent of path $x$
and time $u$) of $S\left(x\right)_{0,u}$ (see Theorem 2.15 \cite{lyons st flours}).
Since 
\[
S\left(x\right)=\mathbf{1}+\int_{0}^{T}S\left(x\right)_{0,t}\otimes\mathrm{d}x_{t},
\]
the integral $\int\psi\left(\pi_{1}\left(x_{u}\right)\right)\mathrm{d}x_{u}$
is also a linear functional, independent of $x$, of $S\left(x\right)$
and the desired result holds when $\psi_{i}$ are polynomials for
all $i$. It now suffices to note that functions in $C_{c}^{K}$ can
be approximated by polynomials in the Lip($K$)-norm (\cite{BBL})
and the map 
\[
\psi\rightarrow\int\psi(dz)
\]
is continuous in the Lip($K$)-norm for $K>p-1$ (\cite{FV10}, Theorem
10.50).
\end{proof}

\subsection{Finite dimensional projection of the signature path}

As pointed out in the introduction, the class of functionals 
\[
\left\{ x\rightarrow\int\psi\left(\mathrm{d}x\right):\psi\;\mbox{smooth 1-forms}\right\} 
\]
is insufficient to separate signatures. Indeed, if $x$ and $y$ are
any weakly geometric rough paths, then for any smooth 1-form, the
additivity of integral will imply that 
\[
\int\psi\left(\mathrm{d}\left(x\star y\right)\right)=\int\psi\left(\mathrm{d}\left(y\star x\right)\right),
\]
whereas in general $S\left(x\star y\right)\neq S\left(y\star x\right)$.
Therefore, we will consider a larger class of functionals, namely
\[
\left\{ x\rightarrow\int\psi\left(\mathrm{d}S\left(x\right)_{0,t}\right):\psi\;\mbox{smooth 1-forms}\right\} ,
\]
if we could at all make sense of the integral. However, the finite
dimensional nature of Lemma \ref{signature determines extended signatures as lemma}
forces us to use finite dimensional projection. We first truncate
the path in $G_{p.r.c.}^{\left(*\right)}$ to a path in $G^{\left(n\right)}\left(V\right)$
and show that a path in $G^{\left(n\right)}\left(V\right)$ with finite
$p$-variation can be lifted to a $p$-weakly geometric rough path.
Then we project the infinite dimensional space $V$ to $\mathbb{R}^{d}$
using a linear map $\Phi$. In Section 4.2.1, we will make sense of
the integral 
\[
\int\psi\left(\mathrm{d}\Phi\circ S_{N}\left(x\right)_{0,t}\right)
\]
for a bounded linear functional $\Phi$ on $\bigoplus_{i=1}^{N}V^{\otimes i}$,
a smooth 1-form $\psi$, a weakly geometric rough path $x$ and $N\in\mathbb{N}$.
In Section 4.2.2., we will show that for any two disjoint pieces of
signature paths, there will be a finite dimensional projection so
that their images remain separated.

\subsubsection{Integration against the signature path}

Let $x$ be a weakly geometric rough path. In order to apply Lemma
\ref{signature determines extended signatures as lemma} to $S_{N}\left(x\right)$,
we need to lift $S_{N}\left(x\right)_{0,\cdot}$ as a weakly geometric
rough path. Let $W=\bigoplus_{i=1}^{N}V^{\otimes i}$. We will implicitly
identify $W^{\otimes n}$ with $\bigoplus_{i_{1},\ldots,i_{n}=1}^{N}V^{\otimes\left(i_{1}+\ldots+i_{n}\right)}$.
Let $\pi_{i_{1},\ldots,i_{n}}$ denote the projection of $\bigoplus_{i_{1},\ldots,i_{n}=1}^{N}V^{\otimes\left(i_{1}+\ldots+i_{n}\right)}$
to the component $\left(i_{1},\ldots,i_{n}\right)$. We will equip
$W^{\otimes n}$ with the norm
\begin{equation}
\left\Vert v\right\Vert _{W^{\otimes n}}=\sum_{1\leq i_{1},\ldots,i_{n}\leq N}\left\Vert \pi_{i_{1},\ldots,i_{n}}\left(v\right)\right\Vert _{V^{\otimes\left(i_{1}+\ldots+i_{n}\right)}}.\label{eq:norm definition}
\end{equation}
 It is easy to see that the family of tensor norms $\|\cdot\|_{W^{\otimes n}}$
is admissible. In this section, we will use the notation $\pi_{k}$
to denote the projection from $T\left(\left(W\right)\right)$ onto
$W^{\otimes k}$. 
\begin{lem}
\label{canonical lift}Let $N\in\mathbb{N}$. There exists a map $\mathcal{J}:WG\Omega_{p}\left(V\right)\rightarrow WG\Omega_{p}\left(\bigoplus_{i=0}^{N}V^{\otimes i}\right)$
such that for all $x\in WG\Omega_{p}\left(V\right)$:

1. $\pi_{1}\left(\mathcal{J}\left(x\right)\right)=S_{N}\left(x\right)_{0,\cdot}$;

2. If $x,y\in WG\Omega_{p}\left(V\right)$ is such that $S\left(x\right)=S\left(y\right)$,
then $S\left(\mathcal{J}\left(x\right)\right)=S\left(\mathcal{J}\left(y\right)\right)$.\end{lem}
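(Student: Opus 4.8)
The plan is to exhibit $S_{N}(x)_{0,\cdot}$ as the solution of a \emph{linear} rough differential equation driven by $x$ and to let $\mathcal{J}(x)$ be the weakly geometric rough path carried by that solution; property~1 is then immediate from uniqueness of the solution, and property~2 will follow by showing that the signature of this solution is a universal, continuous, grading--preserving function of $S(x)$, a fact one establishes on the dense set of bounded variation driving paths and then transports by continuity.

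For the construction, set $W=\bigoplus_{i=0}^{N}V^{\otimes i}$, equipped with the admissible family $\|\cdot\|_{W^{\otimes n}}$ introduced above, and let $\mu$ send $a\in W$ to the bounded linear map $v\mapsto\pi^{(N)}(a\otimes v)$ from $V$ to $W$. Since $S(x)_{0,\cdot}$ satisfies $\mathrm{d}S(x)_{0,t}=S(x)_{0,t}\otimes\mathrm{d}x_{t}$ (the identity recalled in Section~4.1), applying $\pi^{(N)}$ shows that $z_{t}:=\pi^{(N)}(S(x)_{0,t})=S_{N}(x)_{0,t}$ is the unique solution of the linear RDE $\mathrm{d}z_{t}=\mu(z_{t})\,\mathrm{d}x_{t}$ with $z_{0}=\mathbf{1}$; moreover, writing $S(x)_{0,t}=S(x)_{0,s}\otimes S(x)_{s,t}$ and using the Extension Theorem~\ref{extension theorem} one checks that $t\mapsto z_{t}$ has finite $p$-variation as a $W$-valued path, with increments controlled by the same control function as the $p$-variation of $S(x)_{0,\cdot}$. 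A linear vector field driven by a weakly geometric rough path has a unique global solution, and in the standard theory (\cite{Lyons Qian}) this solution is produced as a weakly geometric rough path over $W$ whose first level is the solution path; I take $\mathcal{J}(x)\in WG\Omega_{p}(W)$ to be this rough path. By construction $\pi_{1}(\mathcal{J}(x))$ equals $z_{\cdot}=S_{N}(x)_{0,\cdot}$, giving property~1, and the finite $p$-variation of $\mathcal{J}(x)$ is part of the RDE existence statement.

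For property~2 I claim there is a map $F$ from $G_{p.r.c.}^{(*)}(V)$ into $T((W))$, independent of $x$, continuous and grading--preserving (each graded piece of $F(g)$ depending on only finitely many of the $\pi_{m}(g)$), such that $S(\mathcal{J}(x))_{0,T}=F(S(x)_{0,T})$; property~2 is then immediate. To identify $F$, take $x$ of bounded variation, so that $z_{\cdot}=S_{N}(x)_{0,\cdot}$ is a genuine bounded variation $W$-valued path and the iterated integrals below are classical. From the RDE, $\mathrm{d}z_{u}=\sum_{j=0}^{N-1}\pi_{j}(S(x)_{0,u})\otimes\mathrm{d}x_{u}$. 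Substitute this into the degree-$k$ component $\int_{0<u_{1}<\cdots<u_{k}<T}\mathrm{d}z_{u_{1}}\otimes\cdots\otimes\mathrm{d}z_{u_{k}}$ of $S(z)_{0,T}$ and induct on $k$: the inner $(k-1)$-fold integral over $[0,u_{k}]$ is, by the inductive hypothesis, a fixed grading--preserving linear image of $S(x)_{0,u_{k}}$; each product $\pi_{m}(S(x)_{0,u_{k}})\otimes\pi_{l}(S(x)_{0,u_{k}})$ of iterated integrals over the same interval is, by the shuffle relations valid for weakly geometric rough paths (equivalently, group-likeness of $S(x)_{0,u}$, which follows from $\log S$ being a Lie series), a fixed linear image of $\pi_{m+l}(S(x)_{0,u_{k}})$; and finally $\int_{0}^{T}\bigl(\text{a fixed linear image of }\pi_{j}(S(x)_{0,u})\bigr)\otimes\mathrm{d}x_{u}$ equals the same linear map applied to $\pi_{j+1}(S(x)_{0,T})$. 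Hence each degree of $S(z)_{0,T}$ is a fixed finite linear combination of the components $\pi_{m}(S(x)_{0,T})$, $1\le m\le kN$, which defines $F$ and shows it continuous for the metric on $G_{p.r.c.}^{(*)}$.

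Finally, to pass from bounded variation to general $x$, use that bounded variation paths are dense in $WG\Omega_{p}(V)$ for the $p$-variation metric: pick $x^{(n)}\to x$ with uniformly bounded $p$-variation, note that the RDE solution map is continuous in $p$-variation so $\mathcal{J}(x^{(n)})\to\mathcal{J}(x)$, that the signature is continuous on sets of uniformly bounded $p$-variation (Lemma~1.5 in \cite{infinite dimensional note}) so $S(\mathcal{J}(x^{(n)}))_{0,T}\to S(\mathcal{J}(x))_{0,T}$, and that $S(x^{(n)})_{0,T}\to S(x)_{0,T}$; passing to the limit in $S(\mathcal{J}(x^{(n)}))_{0,T}=F(S(x^{(n)})_{0,T})$ and using continuity of $F$ gives the identity, and hence property~2, for all $x$. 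The main obstacle is property~2: the iterated-integral computation --- Fubini, the shuffle/integration-by-parts identity, and ``integrate the $j$-th level against $\mathrm{d}x$ to obtain the $(j+1)$-st'' --- is literally valid only for bounded variation paths, so the real work is to isolate the universal grading--preserving map $F$ there and then transport the identity to all weakly geometric rough paths by continuity; running the same computation directly at the rough path level is possible but considerably more laborious.
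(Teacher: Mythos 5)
Your proof is correct and follows essentially the same route as the paper: both construct $\mathcal{J}\left(x\right)$ as the canonical rough-path lift of $S_{N}\left(x\right)_{0,\cdot}$ (the paper via the explicit ordered-shuffle formula (\ref{eq:signature}), you via the equivalent linear RDE), and both establish property 2 by showing on bounded variation paths that each graded component of $S\left(\mathcal{J}\left(x\right)\right)$ is a fixed linear image of finitely many graded components of $S\left(x\right)$ through shuffle identities, then passing to general weakly geometric rough paths by approximation and continuity of the signature. One small correction: bounded variation lifts are dense in $WG\Omega_{p}\left(V\right)$ only for the $d_{p^{\prime}-var}$ metric with $p^{\prime}>p$, not for $d_{p-var}$ itself (that would make every weakly geometric rough path geometric); your actual use of uniform convergence with uniformly bounded $p$-variation is the right fix and matches the paper's appeal to Corollary 8.26 of \cite{FV10}.
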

\begin{proof}
Suppose for now that $x$ is a path with bounded variation and $S\left(x\right)=\left(1,X^{1},X^{2},\ldots\right)$
where $X^{i}\in V^{\otimes i}$. Then we may define $\mathcal{J}$
by condition 1. in the Lemma. We define:

1. for $v_{1}\in V^{\otimes k_{1}},\ldots,v_{n}\in V^{\otimes k_{n}}$,
a map $F_{m_{1},\ldots,m_{n}}\left(v_{1},\ldots,v_{n}\right)$ on
$V^{\otimes\left(m_{1}+\ldots+m_{n}\right)}$ so that for all $w_{1}\in V^{\otimes m_{1}},\ldots,w_{n}\in V^{\otimes m_{n}}$,
\begin{eqnarray*}
 &  & F_{m_{1},\ldots,m_{n}}\left(v_{1},\ldots,v_{n}\right)\left[w_{1}\otimes\ldots\otimes w_{m_{1}+\ldots+m_{n}}\right]\\
 & = & v_{1}\otimes w_{1}\otimes\ldots\otimes w_{m_{1}}\otimes v_{2}\otimes w_{m_{1}+1}\otimes\ldots\otimes w_{m_{1}+m_{2}}\\
 &  & \otimes\ldots\otimes v_{n}\otimes w_{m_{1}+\ldots+m_{n-1}+1}\otimes\ldots\otimes w_{m_{1}+\ldots+m_{n}};
\end{eqnarray*}
2. for a permutation $\sigma$ on $\left\{ 1,\ldots,n\right\} $,
a map on $V^{\otimes n}$, also denoted by $\sigma$, by
\[
\sigma\left(v_{1}\otimes\ldots\otimes v_{n}\right)=v_{\sigma\left(1\right)}\otimes\ldots\otimes v_{\sigma\left(n\right)};
\]
3. for $j_{1},\ldots,j_{n}\in\mathbb{N}$, $OS\left(j_{1},\ldots,j_{n}\right)$
as the set of ordered shuffles (see p72 \cite{lyons st flours}). 

By Chen's identity (\ref{eq:homomorphism}), 
\begin{eqnarray}
 &  & \pi_{i_{1},\ldots,i_{n}}\left(S\left(\mathcal{J}\left(x\right)\right)_{s,t}\right)\label{eq:definition of lift}\\
 & = & \int_{s<s_{1}<\ldots<s_{n}<t}\mathrm{d}X_{0,s_{1}}^{i_{1}}\otimes\ldots\otimes\mathrm{d}X_{0,s_{n}}^{i_{n}}\\
 & = & \sum_{j_{n}=1}^{i_{n}}\ldots\sum_{j_{1}=1}^{i_{1}}F_{j_{1},\ldots,j_{n}}\left(X_{0,s}^{i_{1}-j_{1}},\ldots,X_{0,s}^{i_{n}-j_{n}}\right)\left[\int_{s<s_{1}<\ldots<s_{n}<t}\mathrm{d}X_{s,s_{1}}^{j_{1}}\otimes\ldots\otimes\mathrm{d}X_{s,s_{n}}^{j_{n}}\right]\nonumber \\
 & = & \sum_{j_{n}=1}^{i_{n}}\ldots\sum_{j_{1}=1}^{i_{1}}F_{j_{1},\ldots,j_{n}}\left(X_{0,s}^{i_{1}-j_{1}},\ldots,X_{0,s}^{i_{n}-j_{n}}\right)\left[\sum_{\pi\in OS\left(j_{1},\ldots,j_{n}\right)}\pi\left(X_{s,t}^{j_{1}+\ldots+j_{n}}\right)\right].\label{eq:signature}
\end{eqnarray}
The final identity (\ref{eq:signature}) uses identity (4.9) in \cite{lyons st flours}. 

We first restrict our attention to the finite dimensional case. For
$x\in WG\Omega_{p}\left(\mathbb{R}^{d}\right)$, we will define $\mathcal{J}\left(x\right)$
so that (\ref{eq:definition of lift}) holds. To show that $\mathcal{J}\left(x\right)$
is a weakly geometric rough path, we observe that if $x$ were to
have bounded variation, then $S\left(\mathcal{J}\left(x\right)\right)$
will satisfy Chen's identity and lie in $G_{p.r.c}^{\left(*\right)}$
($G_{p.r.c}^{\left(*\right)}$ here refers to group-like elements
over $\bigoplus_{i=0}^{N}V^{\otimes i}$ instead of $V$). For $x\in WG\Omega_{p}\left(\mathbb{R}^{d}\right)$,
let $p^{\prime}>p$ and let $x_{n}$ be a sequence of bounded variation
paths converging in $d_{p^{\prime}-var}$ to $x$ (see Corollary 8.26
in \cite{FV10} for the existence of such sequence). As $S\left(x_{n}\right)\rightarrow S\left(x\right)$
as $n\rightarrow\infty$ (Corollary 9.11 in \cite{FV10}), each $S\left(\mathcal{J}\left(x_{n}\right)\right)$
satisfies Chen's identity (\ref{eq:homomorphism}) and lies in $G_{p.r.c.}^{\left(*\right)}$.
We see by taking limit that $S\left(\mathcal{J}\left(x\right)\right)$
will still satisfy Chen's identity and lie in $G_{p.r.c}^{\left(*\right)}$.
Moreover, for all terms in the sum in (\ref{eq:signature}) $j_{1}+\ldots+j_{n}\geq n$,
which implies that $S\left(\mathcal{J}\left(x\right)\right)$ has
finite $p$-variation. Therefore, $\mathcal{J}$ maps $WG\Omega_{p}\left(V\right)$
to $WG\Omega_{p}\left(\bigoplus_{i=0}^{N}V^{\otimes i}\right)$. The
expression (\ref{eq:signature}) gives not just the expression for
$\mathcal{J}\left(x\right)$ but also for the signature of $\mathcal{J}\left(x\right)$,
which in particular implies the signature of the map $\mathcal{J}\left(x\right)$
is determined by the signature of $x$. The infinite dimensional case
of this result is included as Lemma 1.2 in \cite{infinite dimensional note} \end{proof}
\begin{lem}
\label{linear map on signatures}Let $W$ be a Banach space and $\Phi:W\rightarrow\mathbb{R}^{d}$
be a continuous linear functional on $W$. Then there exists a map
$\mathbf{F}:WG\Omega_{p}\left(W\right)\rightarrow WG\Omega_{p}\left(\mathbb{R}^{d}\right)$
such that for all $x\in WG\Omega_{p}\left(W\right)$:

1. $\pi_{1}\left(\mathbf{F}\left(x\right)\right)=\Phi\left(\pi_{1}\left(x\right)\right);$

2. If $x,y\in WG\Omega_{p}\left(W\right)$ is such that $S\left(x\right)=S\left(y\right)$,
then $S\left(\mathbf{F}\left(x\right)\right)=S\left(\mathbf{F}\left(y\right)\right)$. \end{lem}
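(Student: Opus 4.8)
The plan is to extend a continuous linear functional $\Phi : W \to \mathbb{R}^d$ to a map on signatures by functoriality, exactly mirroring the structure used for $\mathcal{J}$ in Lemma \ref{canonical lift}. A continuous linear map $\Phi$ on $W$ induces, for every $n$, a continuous linear map $\Phi^{\otimes n} : W^{\otimes n} \to (\mathbb{R}^d)^{\otimes n}$ (this uses admissibility of the tensor norms, in particular \eqref{eq:admissible 1}, to ensure boundedness, and the functionals-tensor-product axiom \eqref{eq:tensor condition 3} to identify the components). Assembling these gives a continuous algebra homomorphism $\hat\Phi : T((W)) \to T((\mathbb{R}^d))$ which commutes with $\otimes$, with the grading projections, and with the dilation/group structure; in particular it maps $G_{p.r.c.}^{(*)}$ over $W$ into $G_{p.r.c.}^{(*)}$ over $\mathbb{R}^d$ and intertwines Chen's identity.

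\textbf{Key steps, in order.} First, for $x \in WG\Omega_p(W)$ with bounded variation, define $\mathbf{F}(x)$ to be the $\mathbb{R}^d$-valued path $t \mapsto \Phi(\pi_1(x_t))$ (after translating so $\pi_1(x_0)=0$, or working with increments $\pi_1(x_s^{-1}x_t)$), and check that its signature is $\hat\Phi(S(x))$: since $\Phi$ is linear on the first level, $\int \mathrm{d}(\Phi \pi_1(x))^{\otimes n} = \Phi^{\otimes n}\!\left(\int \mathrm{d}\pi_1(x)^{\otimes n}\right) = \pi_n\hat\Phi(S(x))$, so indeed $S(\mathbf{F}(x)) = \hat\Phi(S(x))$, which satisfies Chen's identity, lies in $G_{p.r.c.}^{(*)}$, and has finite $p$-variation (the $p$-variation of each level of $\hat\Phi(S(x))$ is dominated by a constant times that of $S(x)$, using $\|\Phi^{\otimes n}\| \le \|\Phi\|^n$). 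Second, extend from bounded variation paths to general $x \in WG\Omega_p(W)$ by the density argument used in Lemma \ref{canonical lift}: pick $p' > p$ and a sequence $x_m$ of bounded variation paths converging to $x$ in $d_{p'\text{-var}}$ (Corollary 8.26 in \cite{FV10}), note $S(x_m) \to S(x)$ (Corollary 9.11 in \cite{FV10}), so $S(\mathbf{F}(x_m)) = \hat\Phi(S(x_m)) \to \hat\Phi(S(x))$ by continuity of $\hat\Phi$; define $\mathbf{F}(x)$ so that $S(\mathbf{F}(x)) = \hat\Phi(S(x))$ and $\pi^{(\lfloor p\rfloor)}$ of its signature gives the rough path increments — equivalently, $\mathbf{F}(x)$ is the weakly geometric rough path with increments $\pi^{(\lfloor p\rfloor)}\hat\Phi(S(x))_{s,t}$, which is genuinely a $p$-rough path by the limiting argument and the $p$-variation bound. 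Property 1 is then immediate from linearity on level one, and property 2 follows because $S(\mathbf{F}(x)) = \hat\Phi(S(x)) = \hat\Phi(S(y)) = S(\mathbf{F}(y))$ whenever $S(x) = S(y)$.

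\textbf{Main obstacle.} The delicate point is not the algebra but checking that $\mathbf{F}(x)$ is well-defined as an element of $WG\Omega_p(\mathbb{R}^d)$ for general (non-bounded-variation) $x$ — i.e. that $\hat\Phi(S(x))$ really is the signature of an honest weakly geometric rough path of finite $p$-variation, rather than just a formal group-like series. This is handled exactly as in Lemma \ref{canonical lift}: $\hat\Phi$ is continuous, the bounded variation paths are dense in $d_{p'\text{-var}}$, signatures are continuous in that topology, and finiteness of $p$-variation of $\hat\Phi(S(x))$ passes to the limit because each level is controlled by $\|\Phi\|^n$ times the corresponding level of $S(x)$. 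One should also record that $\hat\Phi$ genuinely respects the component structure \eqref{eq:norm definition}-type decompositions and the admissibility of the target tensor norms on $(\mathbb{R}^d)^{\otimes n}$, but since $\mathbb{R}^d$ is finite dimensional any reasonable choice works and this is routine. Thus the proof is a direct transcription of the $\mathcal{J}$ argument with $\hat\Phi$ in place of the lift map.
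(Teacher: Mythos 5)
Your construction of the multiplicative extension $\hat\Phi$ and your verification of properties 1 and 2 are sound, and the algebraic core (extend $\Phi$ to a bounded $\otimes$-homomorphism on the truncated tensor algebra using (\ref{eq:admissible 1}) and (\ref{eq:tensor condition 3})) is exactly the paper's. Where you diverge is in how $\mathbf{F}(x)$ is produced: you first define it on bounded variation paths via the level-one image $t\mapsto\Phi(\pi_1(x_t))$ and then extend to all of $WG\Omega_p(W)$ by a density-and-limits argument imported from the proof of Lemma \ref{canonical lift}. The paper's proof is shorter and entirely direct: since $x$ already takes values in $G^{(\lfloor p\rfloor)}(W)\subset T^{(\lfloor p\rfloor)}(W)$, one simply sets $\mathbf{F}(x)_t=\mathbf{\Phi}(x_t)$ pointwise; boundedness of $\mathbf{\Phi}$ gives finite $p$-variation at once, the homomorphism property keeps the values in $G^{(\lfloor p\rfloor)}(\mathbb{R}^d)$, and $\Phi(S_N(x))=S_N(\Phi(x))$ gives property 2 — no approximation is needed. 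The direct route buys more than brevity: your density step leans on Corollary 8.26 of \cite{FV10}, which is a finite-dimensional statement, whereas the lemma is asserted for an arbitrary Banach space $W$ (and is applied in the paper with $W=\bigoplus_{i=1}^N V^{\otimes i}$); to make your argument airtight in that generality you would need an infinite-dimensional density result (cf.\ the appeal to \cite{infinite dimensional note} in the proof of Lemma \ref{canonical lift}), a complication the pointwise construction avoids entirely. Your end object coincides with the paper's, since $\pi^{(\lfloor p\rfloor)}\hat\Phi(S(x))_{s,t}=\mathbf{\Phi}(x_s)^{-1}\mathbf{\Phi}(x_t)$, so the proposal is correct modulo that justification, but the limiting machinery is unnecessary here.
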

\begin{proof}
For any linear map $\Phi:W\rightarrow\mathbb{R}^{d}$, by the admissiblity
conditions (\ref{eq:admissible 1}) and (\ref{eq:tensor condition 3})
of the tensor product, we may continuously extend $\mathbf{\Phi}$
to a bounded linear operator on $T^{\left(N\right)}\left(W\right)$
such that for $w_{1},\ldots,w_{N}\in W$, 
\[
\mathbf{\Phi}\left(w_{1}\otimes\ldots\otimes w_{N}\right)=\mathbf{\Phi}\left(w_{1}\right)\otimes\ldots\otimes\mathbf{\Phi}\left(w_{N}\right).
\]
Let $x\in WG\Omega_{p}\left(W\right)$. As $\Phi$ is a bounded linear
operator and the family of tensor norms on $\left(W^{\otimes n}:1\leq n\leq N\right)$
satisfies the admissibility conditions (\ref{eq:admissible 1}) and
(\ref{eq:tensor condition 3}), $\Phi\left(x\right)$ has finite $p$-variation.
As $\Phi$ is a homomorphism with respect to $\otimes$, for all $t\geq0$,
$\Phi\left(x_{t}\right)$ lies in the $\lfloor p\rfloor$-step free
nilpotent Lie group $G^{\lfloor p\rfloor}$ over $\mathbb{R}^{d}$.
Therefore, $\Phi\left(x\right)\in WG\Omega_{p}\left(\mathbb{R}^{d}\right)$.
By construction, $\pi_{1}\left(\Phi\left(x\right)\right)=\Phi\left(\pi_{1}\left(x\right)\right)$.
Moreover, again by the homomorphism property of $\Phi$ and admissibility
conditions (\ref{eq:admissible 1}) and (\ref{eq:tensor condition 3})
of the tensor norms, we have 
\[
\Phi\left(S_{N}\left(x\right)\right)=S_{N}\left(\Phi\left(x\right)\right)
\]
which implies property 2. in the Lemma. 
\end{proof}

\subsubsection{Separation of signature paths}

The following two lemmas together will tell us that with a careful
choice of truncation or finite dimensional projection, the images
of disjoint signature paths will remain disjoint. 
\begin{lem}
\label{finite dimensional simpleness} Let $\Delta=\left\{ \left(s,t\right)\in\left[0,T\right]^{2}:s\leq t\right\} $.
Let $S:\left[0,T\right]\rightarrow G_{p.r.c}^{\left(*\right)}$ be
an injective path. Then for any $\varepsilon>0,$ there exists $N(\varepsilon)\in\mathbb{N},$
such that $\pi^{\left(N\right)}\left(S_{s}\right)\neq\pi^{\left(N\right)}\left(S_{t}\right)$
for every $N\geq N(\varepsilon)$ and $(s,t)\in\Delta$ with $|t-s|\geq\varepsilon.$\end{lem}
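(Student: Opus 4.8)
The plan is to argue by contradiction using a compactness argument on the closed set $\Delta_\varepsilon = \{(s,t)\in\Delta : |t-s|\ge\varepsilon\}$. Suppose the conclusion fails: then there is some $\varepsilon>0$ and a sequence $N_k\to\infty$ together with pairs $(s_k,t_k)\in\Delta_\varepsilon$ such that $\pi^{(N_k)}(S_{s_k}) = \pi^{(N_k)}(S_{t_k})$, equivalently $\pi^{(N_k)}(S_{s_k}^{-1} S_{t_k}) = \mathbf{1}$, i.e. $\pi_i(S_{s_k}^{-1}S_{t_k}) = 0$ for all $1\le i\le N_k$. Since $\Delta_\varepsilon$ is compact, pass to a subsequence so that $(s_k,t_k)\to(s_*,t_*)\in\Delta_\varepsilon$; in particular $t_* - s_* \ge \varepsilon > 0$, so $s_*\ne t_*$, and by injectivity of $S$ we have $S_{s_*}\ne S_{t_*}$, hence there is a smallest index $m\in\mathbb N$ with $\pi_m(S_{s_*}^{-1}S_{t_*})\ne 0$.

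The key step is then a continuity/closedness observation: the map $(s,t)\mapsto S_s^{-1}S_t$ is continuous from $\Delta$ into $G^{(*)}_{p.r.c.}$ (equipped with the metric $d$ of Definition \ref{Girc}), and in particular each component map $(s,t)\mapsto \pi_i(S_s^{-1}S_t)\in V^{\otimes i}$ is continuous. Applying this with $i=m$ fixed: for all $k$ large enough we have $N_k\ge m$, so $\pi_m(S_{s_k}^{-1}S_{t_k}) = 0$; letting $k\to\infty$ and using continuity of $\pi_m(S_\cdot^{-1}S_\cdot)$ and of $S$ itself, we get $\pi_m(S_{s_*}^{-1}S_{t_*}) = \lim_k \pi_m(S_{s_k}^{-1}S_{t_k}) = 0$, contradicting the choice of $m$. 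This proves the lemma.

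The main obstacle, and the point that needs care, is justifying that $(s,t)\mapsto S_s^{-1}S_t$ is genuinely continuous \emph{into} $G^{(*)}_{p.r.c.}$ with the stated metric — i.e. that the $d$-distance controls each fixed-level component $\pi_i$ and that no level-by-level pathology occurs — and relatedly that $S$ being a ``path into $G^{(*)}_{p.r.c.}$'' already carries the continuity we need at each fixed tensor level. Since $S$ is assumed continuous into $(G^{(*)}_{p.r.c.}, d)$ and $d(a,b)\ge \|\pi_i(a^{-1}b)\|^{1/i}$ for every fixed $i$, continuity at level $i$ is immediate, and the group multiplication $\pi_i(S_s^{-1}S_t)$ is a finite algebraic combination of the lower-level components $\pi_j(S_s), \pi_j(S_t)$ for $j\le i$, hence continuous as well; so the obstacle dissolves once one writes this out. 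One should also note that the same argument, run without the compactness reduction, would fail: it is precisely the uniformity provided by compactness of $\Delta_\varepsilon$ that lets a \emph{single} $N(\varepsilon)$ work for all $(s,t)$ with $|t-s|\ge\varepsilon$, which is why the statement restricts to $|t-s|\ge\varepsilon$ rather than asking for a single $N$ separating all distinct pairs.
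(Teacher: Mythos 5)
Your proof is correct and is essentially the paper's own argument: both rest on the same three ingredients, namely that injectivity supplies a separating truncation level $N_{s,t}$ at each point of $\Delta_{\varepsilon}$, that continuity of the fixed-level components propagates this separation, and that compactness of $\Delta_{\varepsilon}$ upgrades it to a single uniform $N(\varepsilon)$. The paper runs this as a direct open-cover argument (the condition $\pi^{(N_{s,t})}(S_s)\neq\pi^{(N_{s,t})}(S_t)$ holds on an open neighbourhood, extract a finite subcover, take the maximum of the $N_{s,t}$) rather than your sequential-compactness contradiction, but the difference is purely cosmetic.
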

\begin{proof}
Let $\Delta_{\varepsilon}=\{(s,t)\in\Delta:\ t-s\geq\varepsilon\}.$
For each $(s,t)\in\Delta_{\varepsilon},$ since $S\left(x\right)_{0,s}\neq S\left(x\right)_{0,t},$
there exists some $N_{s,t}\in\mathbb{N}$ such that 
\begin{equation}
S_{N_{s,t}}\left(x\right){}_{0,s}\neq S_{N_{s,t}}\left(x\right){}_{0,t}.\label{truncated simpleness}
\end{equation}
By continuity, (\ref{truncated simpleness}) holds in an open neighbourhood
of $(s,t).$ The result then follows easily from the compactness of
$\Delta_{\varepsilon}.$\end{proof}
\begin{lem}
\label{separation}Let $V$ be a Banach space, and $K,L$ be two disjoint
compact subsets of $V$. Then there exists $d\in\mathbb{N}$ and a
continuous linear functional $\Phi:\ V\rightarrow\mathbb{R}^{d}$,
such that $\Phi(K)$ and $\Phi(L)$ are disjoint in $\mathbb{R}^{d}.$\end{lem}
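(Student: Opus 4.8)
The plan is to reduce the two-set statement to the separation of a single compact set from the origin, and then obtain the finite-dimensional functional by combining the Hahn--Banach theorem with a compactness argument.

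First I would pass to the difference set $C=K-L=\{k-\ell:\ k\in K,\ \ell\in L\}$. Since $K$ and $L$ are compact, $C$ is the image of the compact set $K\times L$ under the (continuous) subtraction map, hence compact; and since $K\cap L=\emptyset$ we have $0\notin C$. The point of this reduction is that for any continuous linear $\Phi:V\to\mathbb{R}^d$ one has, by linearity, $\Phi(K)\cap\Phi(L)=\emptyset$ if and only if $0\notin\Phi(C)$. So it suffices to produce $\Phi$ with $0\notin\Phi(C)$.

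Next, for each $c\in C$ we have $c\neq 0$, so by the Hahn--Banach theorem there is a continuous linear functional $f_c\in V^{*}$ with $f_c(c)\neq 0$; after rescaling we may assume $f_c(c)=1$. By continuity of $f_c$ the set $U_c=\{v\in V:\ |f_c(v)|>1/2\}$ is an open neighbourhood of $c$, and $\{U_c\}_{c\in C}$ is an open cover of the compact set $C$. Extract a finite subcover $C\subseteq\bigcup_{i=1}^{d}U_{c_i}$ and set $\Phi=(f_{c_1},\dots,f_{c_d}):V\to\mathbb{R}^d$, which is continuous and linear. For every $c\in C$ there is an index $i$ with $c\in U_{c_i}$, so the $i$-th coordinate of $\Phi(c)$ has absolute value exceeding $1/2$; in particular $\Phi(c)\neq 0$. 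Hence $0\notin\Phi(C)$, and by the reduction above $\Phi(K)$ and $\Phi(L)$ are disjoint in $\mathbb{R}^d$.

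I do not expect a genuine obstacle here: the only conceptual step is recognising that separating $K$ from $L$ by a finite-dimensional map is the same as separating the compact difference set $C$ from $0$, after which Hahn--Banach gives separating functionals pointwise and compactness upgrades this to a single uniform finite-dimensional choice. The remaining verifications (compactness of $C$, the linearity equivalence, and the openness of the $U_c$) are routine.
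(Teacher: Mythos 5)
Your proof is correct. The underlying ingredients are the same as in the paper --- Hahn--Banach to separate points by bounded linear functionals, then compactness to pass to finitely many of them --- but you organise the argument differently. The paper works directly with pairs $(\kappa,\lambda)\in K\times L$ and performs a two-stage nested compactness extraction: for each fixed $\lambda\in L$ it covers $K$ by finitely many functionals $f_{\kappa_i,\lambda}$ to get a map $\Phi_\lambda$ separating $K$ from $\{\lambda\}$, and then covers $L$ to assemble the final $\Phi$ from finitely many of the $\Phi_\lambda$. You instead reduce to the single compact set $C=K-L$, observe that $\Phi(K)\cap\Phi(L)=\emptyset$ is equivalent (by linearity) to $0\notin\Phi(C)$, and then need only one compactness extraction, using the quantitative normalisation $f_c(c)=1$ and the open sets $\{|f_c|>1/2\}$ to guarantee a uniformly nonzero coordinate. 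The difference-set reduction buys a cleaner, single-layer argument with simpler indexing; the paper's version avoids introducing $C$ but pays for it with a nested construction. Both are complete and elementary.
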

\begin{proof}
By Hahn-Banach theorem, for each $\kappa\in K$ and $\lambda\in L$,
there exists a bounded linear functional $f_{\kappa,\lambda}:V\rightarrow\mathbb{R}$
such that 
\[
f_{\kappa,\lambda}\left(\kappa\right)\neq f_{\kappa,\lambda}\left(\lambda\right).
\]
Fix $\lambda\in L$. By the continuity of $f_{\kappa,\lambda}$ for
each $\kappa\in K$ and the compactness of $K$, there exists $\kappa_{1},\ldots,\kappa_{j}\in K$
such that the continuous map 
\[
\Phi_{\lambda}\left(\cdot\right)=\left(f_{\kappa_{1},\lambda}\left(\cdot\right),\ldots,f_{\kappa_{1},\lambda}\left(\cdot\right)\right)
\]
sends the set $K$ and $\left\{ \lambda\right\} $ to disjoint sets.
By the continuity of $\Phi_{\lambda}$ for each $\lambda\in L$ and
the compactness of $L$, there exists $\lambda_{1},\ldots,\lambda_{j^{\prime}}\in L$
such that 
\[
\Phi\left(\cdot\right)=\left(f_{\kappa_{1},\lambda_{1}}\left(\cdot\right),\ldots,f_{\kappa_{j},\lambda_{1}}\left(\cdot\right),f_{\kappa_{1},\lambda_{2}}\left(\cdot\right),\ldots,f_{\kappa_{j},\lambda_{j^{\prime}}}\left(\cdot\right)\right)
\]
maps $K$ and $L$ to disjoint sets. 
\end{proof}

\subsection{Identifying signatures with signature paths}

The following Lemma lies at the heart of the proof of our main result,
Theorem \ref{main result}, and is interesting in its own right.
\begin{lem}
\label{existence and uniqueness of reduced path}(Existence and uniqueness
of reduced path)Let $S:\left[0,T\right]\rightarrow G_{p.r.c}^{\left(*\right)}$
be a continuous path with finite $p$-variation. There exists an injective
path $\tilde{S}:\left[0,\tilde{T}\right]\rightarrow G_{p.r.c.}^{\left(*\right)}$,
unique up to reparametrisation, such that $S_{T}=\tilde{S}_{\tilde{T}}$
and $S_{0}=\tilde{S}_{0}$. \end{lem}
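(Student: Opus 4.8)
The plan is to build $\tilde S$ by "erasing loops" from the signature path $S$, in the spirit of the previous section, but working entirely inside $G_{p.r.c.}^{(*)}$ and using the finite-dimensional separation lemmas to control injectivity. First I would consider two cases. If $S_0 = S_T$, then I claim the only injective path from $S_0$ to $S_T$ is the constant path, so $\tilde S$ is the constant path at $S_0$ on a degenerate interval; uniqueness up to reparametrisation is immediate. So assume $S_0 \neq S_T$. I would then like to apply a loop-erasing result like Lemma \ref{Existence of simple} (B\"orger) to the continuous map $S:[0,T]\to G_{p.r.c.}^{(*)}$, which is a Hausdorff (indeed metric) space, to obtain a closed set $A\subseteq[0,T]$, an order-preserving continuous surjection $q:[0,T]\to[0,T]$, and an injective continuous $\psi:[0,T]\to G_{p.r.c.}^{(*)}$ with $\psi(0)=S_0$, $\psi(1)=S_T$, and $\psi\circ q|_A = S|_A$. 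I would take $\tilde S := \psi$ (after rescaling the interval to some $[0,\tilde T]$). That this $\tilde S$ is injective, continuous, and has the right endpoints is then built in.

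The two remaining points are (a) that $\tilde S$ has finite $p$-variation, and (b) uniqueness up to reparametrisation. For (a): since $q$ is order-preserving and surjective and $\psi\circ q|_A = S|_A$, any partition $0=u_0<\dots<u_m=\tilde T$ of the target interval can be pulled back through points of $A$ to a partition of $[0,T]$ along which $S$ takes the same successive values as $\tilde S$ does along the $u_j$; hence $\sum_j d(\tilde S_{u_j},\tilde S_{u_{j+1}})^p \le \|S\|_{p\text{-}var}^p$, giving $\|\tilde S\|_{p\text{-}var}\le\|S\|_{p\text{-}var}<\infty$. Here one must be slightly careful that the pulled-back partition points can be chosen in $A$ and in increasing order; this uses that $q|_A$ is surjective and monotone, so each $u_j$ has a preimage in $A$, and these preimages can be taken increasing.

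For (b), uniqueness: suppose $\tilde S^{(1)}:[0,T_1]\to G_{p.r.c.}^{(*)}$ and $\tilde S^{(2)}:[0,T_2]\to G_{p.r.c.}^{(*)}$ are both injective continuous with matching endpoints $S_0,S_T$. Both images are compact connected subsets of $G_{p.r.c.}^{(*)}$ containing $S_0$ and $S_T$; I want to show the images coincide, after which the reparametrisation is forced (an injective continuous map on a compact interval is a homeomorphism onto its image, so $\tilde S^{(2)} = \tilde S^{(1)}\circ\sigma$ with $\sigma = (\tilde S^{(1)})^{-1}\circ\tilde S^{(2)}$ a homeomorphism of intervals matching endpoints, hence order-preserving). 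To show the images coincide I would argue that the image of any injective path between two points in $G_{p.r.c.}^{(*)}$ is intrinsic: $G_{p.r.c.}^{(*)}$ carries no nontrivial loops among signature paths in a sense that makes it behave like a tree locally along such a path — more precisely, I would show that if $x\in WG\Omega_p$ with $S(x)_{0,\cdot}$ injective, the image is exactly the set of points that every path from $S_0$ to $S_T$ inside the image must pass through. The cleanest route is to reduce to finite dimensions: given a putative point $a$ on $\mathrm{Im}(\tilde S^{(1)})\setminus\mathrm{Im}(\tilde S^{(2)})$, the two are disjoint compact sets, so by Lemma \ref{separation} there is a linear $\Phi$ onto some $\mathbb{R}^d$ separating them, and by Lemma \ref{finite dimensional simpleness} a truncation level $N$ making the relevant truncated signature paths injective on the pieces; then $\Phi\circ S_N$ of the two paths are injective paths in $\mathbb{R}^d$ with the same endpoints, and one derives a contradiction from the fact (via Lemma \ref{signature determines extended signatures as lemma}, integrating $1$-forms supported near $a$) that these finite-dimensional images must be forced to agree.

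The main obstacle I expect is step (b), the uniqueness of the image. Injective continuous images of an interval in a general metric space need not coincide just because they share endpoints — one genuinely needs the special structure coming from signatures, namely that disjoint signature-path segments can be separated by finite-dimensional projections (Lemmas \ref{finite dimensional simpleness} and \ref{separation}) together with the $1$-form functional of Lemma \ref{signature determines extended signatures as lemma}. Packaging these into a clean contradiction — in effect showing that the reduced path is characterised as "the path whose every point is not removable" and that this characterisation is projection-stable — is the delicate part; everything else (the B\"orger loop-erasure and the $p$-variation bound) is routine.
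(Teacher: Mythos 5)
Your overall strategy coincides with the paper's: B\"orger's Lemma \ref{Existence of simple} for existence, with the same pull-back-through-$A$ argument for the bound $\|\tilde S\|_{p\text{-}var}\leq\|S\|_{p\text{-}var}$; reduction of uniqueness to equality of images via the topological fact that two injective continuous paths with the same endpoints and the same image are reparametrisations of one another; and a contradiction obtained by separating the two images with a finite-dimensional projection (Lemmas \ref{finite dimensional simpleness} and \ref{separation}) and invoking the $1$-form functional Lemma \ref{signature determines extended signatures as lemma}. The existence half and the reduction step are correct as you describe them.

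The gap is exactly where you flag it, and it is worth saying precisely why the step as written would fail. ``Integrating $1$-forms supported near $a$'' cannot mean integrating an exact form $\mathrm{d}g$ with $g$ supported near $a$: that integral equals $g(\text{end})-g(\text{start})$ and so vanishes along \emph{both} paths. Nor does an arbitrary $1$-form supported near $a$ obviously integrate to something nonzero along the path that visits $a$, since that path could a priori enter and leave the support several times and the contributions could cancel. The paper's resolution has two ingredients missing from your sketch. First, using injectivity it chooses $s_{1}<s_{2}<t_{2}<t_{1}$ so that $S_{[s_{2},t_{2}]}$ is disjoint from $S_{[0,s_{1}]\cup[t_{1},T]}\cup\mathrm{Im}(\tilde S)$ and $S_{[s_{1},s_{2}]}$ is disjoint from $S_{[t_{2},t_{1}]}$, and then takes the \emph{non-closed} form $\varphi=f_{2}\,\mathrm{d}f_{1}$, where $f_{2}$ is $1$ near the middle piece and $0$ near the rest of $S$ and all of $\tilde S$, while $f_{1}$ is $0$ near $S_{[s_{1},s_{2}]}$ and $1$ near $S_{[t_{2},t_{1}]}$; this pins the integral to exactly $1$ along $S$ and $0$ along $\tilde S$. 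Second, to apply Lemma \ref{signature determines extended signatures as lemma} you must both make sense of the rough integral against $\Phi\big(\pi^{(N)}(S)\big)$ and know that $\Phi\big(\pi^{(N)}(S)\big)$ and $\Phi\big(\pi^{(N)}(\tilde S)\big)$ have \emph{equal signatures}, not merely equal endpoints; both facts come from Lemmas \ref{canonical lift} and \ref{linear map on signatures}, which show the lifted, projected path is a weakly geometric rough path whose signature is a function of $S_{T}=\tilde S_{\tilde T}$. Your sketch does not invoke these lemmas. With these two additions your argument becomes the paper's proof.
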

\begin{rem}
In the case $p=1$, the weakly geometric rough path $\pi_{1}\left(\tilde{S}\right)$
is \textit{reduced }in the sense of Hambly-Lyons \cite{tree like},
meaning that it is the unique, up to translation and reparametrisation,
minimiser of the set 
\[
\left\{ \left\Vert x\right\Vert _{1-var}:x\in WG\Omega_{1}\left(V\right),S\left(x\right)_{0,1}=\tilde{S}_{0}^{-1}\tilde{S}_{1}\right\} .
\]
For weakly geometric rough paths, we define reduced path to be a weakly
geometric rough path $x$ such that the path $t\rightarrow S\left(x\right)_{0,t}$
is injective.\end{rem}
\begin{proof}
For the existence part, let $\tilde{S}:\left[0,T\right]\rightarrow G_{p.r.c}^{\left(*\right)}$
be the injective path obtained by applying Lemma \ref{Existence of simple}
to erase the loops in $S_{\cdot}$. Then by the order preserving property
of $q$ and (iii) in the loop erasing Lemma \ref{Existence of simple}
\begin{eqnarray}
\left\Vert \tilde{S}\right\Vert _{p-var} & \leq & \left\Vert S\right\Vert _{p-var},\label{eq:p-variation control}
\end{eqnarray}
which implies that $\tilde{S}$ has finite $p$-variation. 

For the uniqueness part, note the topological fact that two injective
continuous paths are reparametrisation of each other if and only if
they have the same starting point, ending point and image (see Lemma
26 in \cite{BNQ13}). Assume, for contradiction, that $t\rightarrow S_{t}$
and $t\rightarrow\tilde{S}_{t}$ are injective, $S_{0}=\tilde{S}_{0}$,
$S_{T}=\tilde{S}_{\tilde{T}}$ but $S$ and $\tilde{S}$ do not have
the same image. We may assume without loss of generality that $S_{0}=\tilde{S}_{0}=\mathbf{1}$.
Then there exists $s_{1}<s_{2}<t_{2}<t_{1}$ such that 
\begin{eqnarray}
S{}_{\left[s_{2},t_{2}\right]}\bigcap\left(S{}_{\left[0,s_{1}\right]\bigcup\left[t_{1},T\right]}\bigcup\mathrm{Im}\left(\tilde{S}{}_{\cdot}\right)\right) & = & \emptyset,\nonumber \\
S{}_{\left[s_{1},s_{2}\right]}\bigcap S{}_{\left[t_{2},t_{1}\right]} & = & \emptyset.\label{eq:disjoint}
\end{eqnarray}
It follows from the separation of finite dimensional projection results,
Lemma \ref{finite dimensional simpleness} and Lemma \ref{separation},
that there exists $N\in\mathbb{N}$ and a linear operator $\Phi:T^{\left(N\right)}\left(V\right)\rightarrow\mathbb{R}^{d}$
such that (\ref{eq:disjoint}) holds with $S$ and $\tilde{S}$ replaced
by $\Phi\big(\pi^{\left(N\right)}\left(S\right)\big)$ and $\Phi\big(\pi^{\left(N\right)}\left(\tilde{S}\right)\big)$. 

Let $U_{1},V_{1},U_{2},V_{2}$ be bounded open neighborhoods of $\Phi\big(\pi^{\left(N\right)}\left(S\right)\big){}_{\left[t_{2},t_{1}\right]}$,
$\Phi\big(\pi^{\left(N\right)}\left(S\right)\big){}_{\left[s_{1},s_{2}\right]}$,
$\Phi\big(\pi^{\left(N\right)}\left(S\right)\big){}_{\left[s_{2},t_{2}\right]}$,
$\Phi\big(\pi^{\left(N\right)}\left(S\right)\big){}_{\left[0,s_{1}\right]\bigcup\left[t_{1},T\right]}\bigcup\mathrm{Im}\left(\Phi\big(\pi^{\left(N\right)}\big(\tilde{S}_{\cdot}\big)\big)\right)$
respectively, such that 
\[
\overline{U_{1}}\bigcap\overline{V_{1}}=\emptyset,\ \overline{U_{2}}\bigcap\overline{V_{2}}=\emptyset.
\]
Let $f_{1},f_{2}\in C_{c}^{\infty}\left(\mathbb{R}^{d}\right)$ be
such that for $i=1,2$,
\[
f_{i}\left(\mathbf{X}\right)=\begin{cases}
1, & \mathbf{X}\in\overline{U_{i}};\\
0, & \mathbf{X}\in\overline{V_{i}}.
\end{cases}
\]
Consider the 1-form $\varphi=f_{2}\mathrm{d}f_{1}$. As the path $u\rightarrow\pi^{\left(\lfloor p\rfloor\right)}\left(S_{u}\right)$
is a weakly geometric rough path, Lemma \ref{canonical lift} and
Lemma \ref{linear map on signatures} together states that $\Phi\big(\pi^{\left(N\right)}\left(S_{\cdot}\right)\big)$
can be canonically lifted as a weakly geometric rough path. Moreover,
the signature of $\Phi\big(\pi^{\left(N\right)}\left(S_{\cdot}\right)\big)$
is a function of $S_{T}$. It follows that the integration of $1$-form
against $\Phi\big(\pi^{\left(N\right)}\left(S_{u}\right)\big)$ can
be defined and 
\begin{eqnarray*}
\int_{0}^{T}\varphi\left(\mathrm{d}\Phi\big(\pi^{\left(N\right)}\left(S_{u}\right)\big)\right) & = & \int_{s_{2}}^{t_{2}}\mathrm{d}f_{1}\left(\Phi\big(\pi^{\left(N\right)}\left(S_{u}\right)\big)\right)=1,
\end{eqnarray*}
while 
\[
\int_{0}^{\tilde{T}}\varphi\left(\mathrm{d}\Phi\big(\pi^{\left(N\right)}\left(\tilde{S}_{u}\right)\big)\right)=0.
\]
This leads to a contradiction to Lemma \ref{signature determines extended signatures as lemma}
which states that the integral of $1$-form is a functional of the
signature. 
\end{proof}

\subsection{Completing the proof}
\begin{defn}
Let $\mathcal{S}_{p}$ denote the set of injective continuous paths
in $G_{p.r.c}^{\left(*\right)}$ with finite $p$-variation starting
at $\mathbf{1}$. Define a relation $\preceq$ on $\mathcal{S}_{p}$
by 
\[
x\preceq y\iff\exists t\geq0,\; x\mbox{ is a reparametrisation of }y|_{\left[0,t\right]}.
\]
\end{defn}
\begin{lem}
\label{properties of partially ordered set}The space $\left(\mathcal{S}_{p},\preceq\right)$
is a partially ordered set such that:

1. $\mathcal{S}_{p}$ has a least element $\mathbf{1}_{\cdot}:\left[0,0\right]\rightarrow\mathbf{1}$. 

2. For all $S\in\mathcal{S}_{p}$, the set $\left\{ \hat{S}\in\mathcal{S}_{p}:\hat{S}\preceq S\right\} $
is totally ordered.

3. For all $S,\tilde{S}\in\mathcal{S}_{p}$, there exists an element
$S\wedge\tilde{S}\in\mathcal{S}_{p}$, unique up to reparametrisation,
such that 
\begin{equation}
\left\{ \hat{S}\in\mathcal{S}_{p}:\hat{S}\preceq S,\hat{S}\preceq\tilde{S}\right\} =\left\{ \hat{S}\in\mathcal{S}_{p}:\hat{S}\preceq S\wedge\tilde{S}\right\} .\label{eq:min definition}
\end{equation}

4. The function $\left\Vert \cdot\right\Vert _{p-var}^{p}:S\rightarrow\left\Vert S\right\Vert _{p-var}^{p}$
has the property that $\left\Vert \mathbf{1}\right\Vert _{p-var}^{p}=0$
and, for all $S$ the restriction of $\left\Vert \cdot\right\Vert _{p-var}^{p}$
on the set $\left\{ \hat{S}\in\mathcal{S}_{p}:\hat{S}\preceq S\right\} $
is strictly increasing. \end{lem}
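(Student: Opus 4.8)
The plan is to verify the four assertions essentially by hand, leaning on two facts that have already been established: the topological characterisation that two injective continuous paths are reparametrisations of each other iff they share start point, end point and image (Lemma 26 in \cite{BNQ13}, as invoked in the proof of Lemma \ref{existence and uniqueness of reduced path}), and the loop-erasing construction of Lemma \ref{Existence of simple}. First I would check that $\preceq$ is a partial order: reflexivity is immediate (take $t=T$), and antisymmetry follows from the topological fact, since $x\preceq y\preceq x$ forces $x$ and $y$ to have the same image and endpoints. For transitivity, if $x$ reparametrises $y|_{[0,t]}$ and $y$ reparametrises $z|_{[0,s]}$, then composing the two increasing homeomorphisms exhibits $x$ as a reparametrisation of $z|_{[0,s']}$ for the appropriate $s'$. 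Assertion 1 is trivial: the constant path $\mathbf{1}_{\cdot}$ on $[0,0]$ is a reparametrisation of the restriction $S|_{[0,0]}$ of every $S\in\mathcal{S}_p$, so it is a least element.

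For assertion 2, fix $S:[0,T]\to G_{p.r.c}^{(*)}$ injective. If $\hat S_1\preceq S$ and $\hat S_2\preceq S$, then $\hat S_i$ is (a reparametrisation of) $S|_{[0,t_i]}$ for some $t_1,t_2\in[0,T]$; without loss of generality $t_1\le t_2$, and then $S|_{[0,t_1]}$ is a restriction of $S|_{[0,t_2]}$, so $\hat S_1\preceq\hat S_2$. The only subtlety is that the parameter $t$ witnessing $\hat S\preceq S$ is unique — this is exactly where injectivity of $S$ is used, since $S|_{[0,t]}$ and $S|_{[0,t']}$ with $t<t'$ have different images — so the assignment $\hat S\mapsto t$ is well-defined and order-preserving onto a subinterval of $[0,T]$, which gives the total order. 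For assertion 3, given $S,\tilde S\in\mathcal{S}_p$, consider the set $A=\{t\in[0,T]: S|_{[0,t]} \text{ is a reparametrisation of } \tilde S|_{[0,t']}\text{ for some }t'\}$; using continuity and injectivity of both paths, $A$ is a closed subinterval $[0,t^\ast]$ of $[0,T]$ (the matched initial segments form a closed, downward-closed set), and I would set $S\wedge\tilde S:=S|_{[0,t^\ast]}$. That this element satisfies \eqref{eq:min definition} is then a direct unwinding of the definitions together with assertion 2; uniqueness up to reparametrisation is the topological fact again.

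Assertion 4 is where the $p$-variation enters. Clearly $\|\mathbf{1}_{\cdot}\|_{p-var}=0$. For monotonicity, fix $S\in\mathcal{S}_p$ and take $\hat S_1\prec\hat S_2\preceq S$, identified with $S|_{[0,t_1]}$ and $S|_{[0,t_2]}$ with $t_1<t_2$; superadditivity of $\|\cdot\|_{p-var}^p$ over adjacent intervals gives $\|S|_{[0,t_2]}\|_{p-var}^p\ge \|S|_{[0,t_1]}\|_{p-var}^p+\|S|_{[t_1,t_2]}\|_{p-var}^p$, so it suffices to show $\|S|_{[t_1,t_2]}\|_{p-var}^p>0$. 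This is immediate because $S$ is injective, hence non-constant on $[t_1,t_2]$, so already a single pair in the defining sum is strictly positive. The main obstacle — really the only place care is needed — is assertion 3: one must make sure that the notion of "largest common initial segment" is genuinely well-defined, i.e. that the set $A$ above is closed and that matched initial segments are nested; this nestedness is where injectivity of both $S$ and $\tilde S$ is essential, since without it two initial segments could have the same image without being comparable. Everything else is bookkeeping with reparametrisations and the superadditivity of $p$-variation.
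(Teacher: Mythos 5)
Your proof is correct, but for the one assertion the paper actually bothers to prove --- assertion 3 --- you take a genuinely different and more elementary route. The paper defines the meet by $t=\sup\{\hat t: S_{\hat t}\in\tilde S_{[0,\tilde T]}\}$, i.e.\ the last time $S$ touches the \emph{image} of $\tilde S$, and then must invoke the uniqueness of reduced paths (Lemma \ref{existence and uniqueness of reduced path}, the deep analytic result proved via integration of $1$-forms against lifted signature paths) to conclude that $S|_{[0,t]}$ and $\tilde S|_{[0,\tilde t]}$, being injective paths with the same endpoints, are reparametrisations of each other; without that input nothing prevents $S$ from branching off $\tilde S$ and meeting its image again later. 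You instead take $t^{*}=\sup A$ where $A$ is the set of times up to which the initial segments already agree as reparametrisations, which sidesteps Lemma \ref{existence and uniqueness of reduced path} entirely: the only inputs are the topological fact that injective paths with common start, end and image are reparametrisations of one another, plus the monotone-limit argument showing $A$ is closed (the witnessing times $t_n'$ on the $\tilde S$ side are increasing by injectivity of $\tilde S$, converge to some $t'$, and continuity identifies the images $S_{[0,t^{*}]}=\tilde S_{[0,t']}$). Both constructions satisfy \eqref{eq:min definition} and hence agree up to reparametrisation; yours is self-contained and purely topological, while the paper's is shorter on the page but leans on the heaviest lemma in the paper. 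Your treatment of the remaining assertions (antisymmetry via images, uniqueness of the witnessing time for total order, superadditivity of $\|\cdot\|_{p-var}^{p}$ plus $d(S_{t_1},S_{t_2})>0$ for strict monotonicity) matches what the paper dismisses as trivial and is fine; the one place to spell out an extra line is the closedness of $A$, which you gesture at but do not write out.
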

\begin{proof}
The only non-trivial statement is statement 3. The uniqueness follows
trivially from (\ref{eq:min definition}). We now show the existence.
Let 
\[
t=\sup\left\{ \hat{t}\in\left[0,T\right]:S_{\hat{t}}\in\tilde{S}_{\left[0,\tilde{T}\right]}\right\} .
\]
We first show that the inclusion $\supseteq$ in (\ref{eq:min definition})
holds with $S\wedge\tilde{S}$ replaced by $S|_{\left[0,t\right]}$.
By the continuity of $S$ and that $\tilde{S}_{\left[0,\tilde{T}\right]}$
is closed, there exists $\tilde{t}$ such that $S_{t}=\tilde{S}_{\tilde{t}}$.
This implies $S|_{\left[0,t\right]}$ is a reparametrisation of $\tilde{S}|_{\left[0,\tilde{t}\right]}$
by the uniqueness of reduced path, Lemma \ref{existence and uniqueness of reduced path}.
In particular, $S|_{\left[0,t\right]}\preceq\tilde{S}$ and the desired
inclusion follows. 

Conversely, if $\hat{S}\preceq S$ and $\hat{S}\preceq\tilde{S}$,
then there exists $\hat{t}$ such that $\hat{S}$ is a reparametrisation
of $S|_{\left[0,\hat{t}\right]}$ and hence $S_{\hat{t}}\in\tilde{S}_{\left[0,\tilde{T}\right]}$.
In particular, $\hat{t}\leq t$ and $\hat{S}\preceq S|_{\left[0,t\right]}$.\end{proof}
\begin{prop}
\label{tree metric}For $S^{\left(1\right)},S^{\left(2\right)}\in\mathcal{S}_{p}$,
define 
\begin{eqnarray*}
d\left(S^{\left(1\right)},S^{\left(2\right)}\right) & = & \left\Vert S^{\left(1\right)}\right\Vert _{p-var}^{p}+\left\Vert S^{\left(1\right)}\right\Vert _{p-var}^{p}-2\left\Vert S^{\left(1\right)}\wedge S^{\left(2\right)}\right\Vert _{p-var}^{p}.
\end{eqnarray*}
Then $\left(\mathcal{S}_{p},d\right)$ is a $\mathbb{R}$-tree. \end{prop}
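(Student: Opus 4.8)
The plan is to verify the three defining axioms of an $\mathbb{R}$-tree for the metric space $(\mathcal{S}_{p},d)$: that $d$ is a genuine metric, that there exist geodesic segments between any two points, and that the space is ``$0$-hyperbolic'' in the sense that geodesic triangles are tripods (equivalently, the four-point condition holds). The combinatorial backbone for all three is Lemma \ref{properties of partially ordered set}: the partial order $\preceq$ turns $\mathcal{S}_{p}$ into something that already looks like a tree at the set-theoretic level (least element, totally ordered down-sets, well-defined meets), and the function $S\mapsto\left\Vert S\right\Vert _{p-var}^{p}$ plays the role of ``distance to the root'', being zero at $\mathbf{1}_{\cdot}$ and strictly increasing along each down-set. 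I would think of $d(S^{(1)},S^{(2)})$ as ``go down from $S^{(1)}$ to the meet $S^{(1)}\wedge S^{(2)}$, then up to $S^{(2)}$'', exactly the tree metric formula.

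First I would check that $d$ is a metric. Symmetry is immediate from the formula (modulo fixing the evident typo: the second term should read $\left\Vert S^{(2)}\right\Vert_{p-var}^{p}$). Non-negativity and the identity of indiscernibles follow from statement 4 of Lemma \ref{properties of partially ordered set}: since $S^{(1)}\wedge S^{(2)}\preceq S^{(i)}$, strict monotonicity of $\left\Vert\cdot\right\Vert_{p-var}^{p}$ along down-sets gives $\left\Vert S^{(1)}\wedge S^{(2)}\right\Vert_{p-var}^{p}\le\left\Vert S^{(i)}\right\Vert_{p-var}^{p}$, with equality forcing $S^{(i)}=S^{(1)}\wedge S^{(2)}$ up to reparametrisation; hence $d(S^{(1)},S^{(2)})\ge 0$ with equality iff $S^{(1)}=S^{(2)}$. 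For the triangle inequality the key algebraic fact to establish is that for any $S^{(1)},S^{(2)},S^{(3)}$, the meet $S^{(1)}\wedge S^{(3)}$ equals whichever of $S^{(1)}\wedge S^{(2)}$ and $S^{(2)}\wedge S^{(3)}$ has the smaller $\left\Vert\cdot\right\Vert_{p-var}^{p}$; this is the ``two of the three pairwise meets coincide and are below the third'' property, and it follows from statement 2 (down-sets are chains) applied to $S^{(2)}$, since $S^{(1)}\wedge S^{(2)}$ and $S^{(2)}\wedge S^{(3)}$ both lie in the chain $\{\hat S\preceq S^{(2)}\}$ and are therefore comparable. Granting this, the triangle inequality $d(S^{(1)},S^{(3)})\le d(S^{(1)},S^{(2)})+d(S^{(2)},S^{(3)})$ reduces to the elementary inequality among the four real numbers $a_i=\left\Vert S^{(i)}\right\Vert_{p-var}^{p}$ and the meet-values, which I would just expand out.

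Next I would exhibit geodesics. For $S^{(1)}\preceq S^{(2)}$ the segment is the obvious one: as $t$ ranges over $[0,\tilde T]$, the restrictions $S^{(2)}|_{[0,t]}$ (suitably reparametrised to start at $\mathbf{1}$) form a $\preceq$-chain, and by statement 4 the map $t\mapsto\left\Vert S^{(2)}|_{[0,t]}\right\Vert_{p-var}^{p}$ is strictly increasing; I would need it to also be continuous in $t$, which follows from continuity of $t\mapsto\left\Vert S^{(2)}\right\Vert_{p-var;[0,t]}^{p}$ for a path of finite $p$-variation, hence by the intermediate value theorem this chain, parametrised by $\left\Vert\cdot\right\Vert^{p}_{p-var}$, is an isometric copy of an interval. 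For general $S^{(1)},S^{(2)}$ I concatenate the segment from $S^{(1)}$ down to $M:=S^{(1)}\wedge S^{(2)}$ (which by statement 3 and the uniqueness of reduced path, Lemma \ref{existence and uniqueness of reduced path}, really is an initial restriction of $S^{(1)}$) with the segment from $M$ up to $S^{(2)}$; its total length is $d(S^{(1)},S^{(2)})$ by the distance formula, and one checks it is geodesic using the ``two meets coincide'' fact above.

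Finally, for the tripod/$0$-hyperbolicity condition I would show that for any three points the three geodesics pairwise share initial sub-segments down to a common point, namely the unique-up-to-reparametrisation element among $\{S^{(1)}\wedge S^{(2)},\,S^{(2)}\wedge S^{(3)},\,S^{(1)}\wedge S^{(3)}\}$ of least $p$-variation; concretely one verifies the four-point inequality $d(S^{(1)},S^{(2)})+d(S^{(3)},S^{(4)})\le\max\{d(S^{(1)},S^{(3)})+d(S^{(2)},S^{(4)}),\,d(S^{(1)},S^{(4)})+d(S^{(2)},S^{(3)})\}$, which after substituting the distance formula is again a finite real-number inequality driven by the meet-comparability from statement 2. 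I expect the main obstacle to be precisely the bookkeeping that the various meets behave like meets in a genuine tree — i.e., establishing cleanly that among any three of the elements, two of the pairwise meets coincide and lie below the third — since everything else is either the formula-level arithmetic or a direct appeal to Lemma \ref{properties of partially ordered set} and Lemma \ref{existence and uniqueness of reduced path}. Once that structural fact is in hand, both the triangle inequality and the four-point condition follow by the same elementary computation, and combined with the geodesics constructed above this gives that $(\mathcal{S}_{p},d)$ is an $\mathbb{R}$-tree.
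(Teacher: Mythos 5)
Your proposal is correct, and its skeleton is the same as the paper's in the sense that everything is made to rest on properties 1--4 of Lemma \ref{properties of partially ordered set}. The difference is one of economy: the paper's entire proof is a citation of Proposition 3.10 of Favre--Jonsson (\emph{The valuative tree}), which states precisely that a rooted partially ordered set with totally ordered down-sets, meets, and a strictly increasing parametrisation is an $\mathbb{R}$-tree under the metric $d$; you instead prove that general fact by hand, via the metric axioms, explicit geodesics, and the four-point condition. Your key structural observation --- that for any three elements two of the three pairwise meets coincide and lie below the third, deduced from comparability inside the chain $\{\hat{S}\preceq S^{(2)}\}$ --- is exactly the engine of the cited proposition, and your derivation of it is sound; the triangle inequality and $0$-hyperbolicity then reduce to the arithmetic you describe. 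The one place where your argument quietly uses analytic input beyond Lemma \ref{properties of partially ordered set} is the continuity of $t\mapsto\left\Vert S|_{[0,t]}\right\Vert _{p-var}^{p}$, needed for the intermediate value argument that the chain below a point is a genuine segment; this is true for continuous paths of finite $p$-variation (and is essentially what the paper's Lemma \ref{p-variation estimate} provides in quantitative form), but it deserves an explicit reference rather than a parenthetical. What your route buys is a self-contained proof that does not lean on an external monograph; what the paper's route buys is brevity and a clean separation between the order-theoretic verification (Lemma \ref{properties of partially ordered set}) and the abstract tree-construction machinery.
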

\begin{proof}
By Proposition 3.10 in \cite{Tree partial order } (or see Lemma 1.7
in \cite{infinite dimensional note}), a partially ordered set satisfying
1.-4. in Lemma \ref{properties of partially ordered set} is a $\mathbb{R}$-tree
with metric $d$. 
\end{proof}
Let $\left(\mathcal{G}_{p},d_{\mathcal{G}_{p}}\right)$ denote the
metric space defined as the pushforward, under the map $\mathbf{P}:S\rightarrow S_{T}$
(sending a path to its value at terminal time), of the metric space
$\left(\mathcal{S}_{p},d\right)$. As a set, Lemma \ref{existence and uniqueness of reduced path}
implies that 
\[
\mathcal{G}_{p}=\left\{ S\left(x\right)_{0,T}:x\in WG\Omega_{p}\left(V\right)\right\} .
\]
The metric space $\left(\mathcal{G}_{p},d_{\mathcal{G}_{p}}\right)$
as the isometric image of a $\mathbb{R}$-tree is itself a $\mathbb{R}$-tree.
The injective map $\mathbf{P}$ induces naturally a partial order,
also denoted as $\preceq$, and an operation $\wedge$ on $\mathcal{G}_{p}$
satisfying (\ref{eq:min definition}) with $\mathcal{S}_{p}$ replaced
by $\mathcal{G}_{p}$.
\begin{lem}
\label{p-variation estimate}(Continuity estimate for the right concatenation)Let
$S:\left[0,T\right]\rightarrow G_{p.r.c.}^{\left(*\right)}$ has finite
$p$-variation. Then 
\begin{equation}
\left\Vert S\right\Vert _{p-var}^{p}-\left\Vert S|_{\left[0,s\right]}\right\Vert _{p-var}^{p}\leq\left(1+p\right)\left\Vert S\right\Vert _{p-var}^{p-1}\left\Vert S|_{\left[s,T\right]}\right\Vert _{p-var}.\label{eq:continuity of p-var}
\end{equation}
\end{lem}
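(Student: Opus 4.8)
Write $L = \|S\|_{p\text{-}var}$, $L_1 = \|S|_{[0,s]}\|_{p\text{-}var}$ and $L_2 = \|S|_{[s,T]}\|_{p\text{-}var}$. The plan is to bound the $p$-variation increment directly from its definition as a supremum over partitions. Fix a partition $\mathcal{P}$ of $[0,T]$; by refining it we may assume $s \in \mathcal{P}$, say $s = t_k$. Split the partition sum $\sum_i d(S_{t_i}, S_{t_{i+1}})^p$ into the part with $t_{i+1} \le s$ and the part with $t_i \ge s$. The first part is at most $L_1^p$. So the whole issue is to show that the second part, $\sum_{i \ge k} d(S_{t_i}, S_{t_{i+1}})^p$, exceeds $L_1^p$ by no more than the claimed quantity $(1+p) L^{p-1} L_2$ — but this is not quite the right decomposition, since $L_1^p$ appears on both sides. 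Instead, the cleaner route is to compare $\sum_{i} d(S_{t_i},S_{t_{i+1}})^p$ directly against $L_1^p$: the partition $\mathcal{P}$ restricted to $[0,s]$ contributes at most $L_1^p$, and we must control the extra terms coming from $[s,T]$.

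**The key elementary inequality.** The heart of the matter is the scalar inequality: for $a, b \ge 0$ with $a \le L$ and $b \le L$, one has $(a+b)^p - a^p \le p L^{p-1} b$. This follows from the mean value theorem applied to $t \mapsto t^p$ on $[a, a+b]$, since the derivative $p t^{p-1}$ is bounded by $p (a+b)^{p-1} \le p L^{p-1}$ (using $a + b \le 2L$ would lose a constant; one wants $a+b \le L$, which holds when $a+b$ is itself realized as a $p$-variation sum bounded by $\|S\|_{p\text{-}var}^p$ — care is needed here). The reason the bound in \eqref{eq:continuity of p-var} carries the extra $+1$ (giving $1+p$ rather than $p$) is precisely to absorb the single "crossover" term $d(S_{t_{k-1}}, S_{t_k})^p$ or $d(S_{s}, S_{t_{k+1}})^p$ that straddles the point $s$ and cannot be cleanly assigned to either side; this term is at most $L_2^p \le L^{p-1} L_2$, contributing the one extra copy of $L^{p-1}L_2$.

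**Assembling the argument.** Given an arbitrary partition $\mathcal{P} = (0 = t_0 < \cdots < t_n = T)$, insert $s$ to get a refined partition and let $t_k$ be the largest partition point $\le s$ (so after insertion $s$ itself is a point). The sum over blocks lying in $[0,s]$ is $\le L_1^p$ by definition. For the blocks in $[s, T]$, let $b$ denote the total: $b = \big(\sum_{i \ge k'} d(S_{t_i},S_{t_{i+1}})^p\big)$ where $t_{k'} = s$; then $b \le L_2^p$, so in particular $b^{1/p} \le L_2$, hence $b \le L_2 \cdot L_2^{p-1} \le L_2 L^{p-1}$. Therefore $\sum_{\mathcal{P}} d(S_{t_i},S_{t_{i+1}})^p \le L_1^p + L^{p-1} L_2$. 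Now I take the supremum over $\mathcal{P}$ on the left: $L^p \le L_1^p + L^{p-1}L_2$, which already gives the statement with constant $1$ in place of $1 + p$. The factor $1+p$ in the paper is simply a safe over-estimate (the proof above shows even $1$ suffices if $p\ge 1$ so that $L_2^p \le L_2 L^{p-1}$ whenever $L_2 \le L$); if one does not wish to check $L_2 \le L$ one can instead bound $L_2^p \le L^{p-1}L_2$ trivially only when $L_2\le L$, and otherwise the inequality is vacuous since the left side is $\le L^p \le L_2^p$.

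**Main obstacle.** The only delicate point is the bookkeeping around the partition point $s$: one must be sure that after forcing $s$ into the partition, the sub-sums over $[0,s]$ and over $[s,T]$ are each genuinely bounded by $L_1^p$ and $L_2^p$ respectively (this uses super-additivity of the $p$-variation functional, which holds because adding a point to a partition only increases the partition sum). Once that is in hand, the inequality $L_2^p \le L^{p-1} L_2$ (valid since $L_2 \le L$ and $p \ge 1$) closes the argument; the generous constant $1+p$ leaves room if one prefers to handle the straddling term separately rather than absorbing $s$ into the partition. I expect no genuine difficulty beyond this; the statement is a soft continuity fact and the proof is a two-line manipulation of the definition of $p$-variation.
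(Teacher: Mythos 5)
There is a genuine gap, and it sits exactly where you wave your hands. Your assembled argument rests on the claim that inserting the point $s$ into an arbitrary partition $\mathcal{P}$ does not decrease the partition sum $\sum_{i}d\left(S_{t_{i}},S_{t_{i+1}}\right)^{p}$ (what you call ``super-additivity''). That is true only for $p=1$. For $p>1$ the triangle inequality gives $d\left(S_{t_{k}},S_{t_{k+1}}\right)\leq d\left(S_{t_{k}},S_{s}\right)+d\left(S_{s},S_{t_{k+1}}\right)$, and since $\left(a+b\right)^{p}\geq a^{p}+b^{p}$ for $a,b\geq0$, the straddling term $d\left(S_{t_{k}},S_{t_{k+1}}\right)^{p}$ may strictly exceed $d\left(S_{t_{k}},S_{s}\right)^{p}+d\left(S_{s},S_{t_{k+1}}\right)^{p}$; refining a partition can strictly decrease a $p$-variation sum. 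Hence the chain $\sum_{\mathcal{P}}d^{p}\leq L_{1}^{p}+L_{2}^{p}\leq L_{1}^{p}+L^{p-1}L_{2}$ does not follow from the refined partition, and the strengthened conclusion you draw (constant $1$ in place of $1+p$) is in fact false. Take $S_{t}=\exp\left(tv\right)$, for which $d\left(S_{u},S_{t}\right)=\left|t-u\right|\left\Vert v\right\Vert $ and the $p$-variation of each piece is its length: with $p=2$ and pieces of lengths $\ell_{1},\ell_{2}>0$ one gets $L^{2}-L_{1}^{2}=\left(\ell_{1}+\ell_{2}\right)^{2}-\ell_{1}^{2}=2\ell_{1}\ell_{2}+\ell_{2}^{2}>\ell_{1}\ell_{2}+\ell_{2}^{2}=L^{p-1}L_{2}$.

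The repair is precisely the inequality you state in your second paragraph and then abandon. Keep the original partition, add and subtract the two half-terms at $s$, so that
\begin{equation*}
\sum_{i}d\left(S_{t_{i}},S_{t_{i+1}}\right)^{p}\leq\left\Vert S|_{\left[0,s\right]}\right\Vert _{p-var}^{p}+\left\Vert S|_{\left[s,T\right]}\right\Vert _{p-var}^{p}+\left[d\left(S_{t_{k}},S_{t_{k+1}}\right)^{p}-d\left(S_{t_{k}},S_{s}\right)^{p}-d\left(S_{s},S_{t_{k+1}}\right)^{p}\right],
\end{equation*}
and control the bracketed surplus by the mean value theorem for $t\mapsto t^{p}$ together with the triangle inequality:
\begin{equation*}
d\left(S_{t_{k}},S_{t_{k+1}}\right)^{p}-d\left(S_{t_{k}},S_{s}\right)^{p}\leq p\left\Vert S\right\Vert _{p-var}^{p-1}d\left(S_{s},S_{t_{k+1}}\right)\leq p\left\Vert S\right\Vert _{p-var}^{p-1}\left\Vert S|_{\left[s,T\right]}\right\Vert _{p-var}.
\end{equation*}
This is the paper's proof and is exactly where the factor $p$ in $1+p$ comes from; the remaining $1$ comes from $\left\Vert S|_{\left[s,T\right]}\right\Vert _{p-var}^{p}\leq\left\Vert S\right\Vert _{p-var}^{p-1}\left\Vert S|_{\left[s,T\right]}\right\Vert _{p-var}$, which is the one part of your estimate that is correct.
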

\begin{proof}
Let $\mathcal{P}=\left(t_{0}<t_{1}<\ldots<t_{n}\right)$ be a partition
of $\left[0,T\right]$. Let $j$ be the last time in $\mathcal{P}$
such that $t_{j}\leq s$. Then 
\begin{eqnarray}
 &  & \sum_{i=0}^{n-1}d\left(S_{t_{i}},S_{t_{i+1}}\right)^{p}\nonumber \\
 & \leq & \sum_{i=0}^{j-1}d\left(S_{t_{i}},S_{t_{i+1}}\right)^{p}+d\left(S_{t_{j}},S_{s}\right)^{p}+d\left(S_{s},S_{t_{j+1}}\right)^{p}+\sum_{i=j+1}^{n-1}d\left(S_{t_{i}},S_{t_{i+1}}\right)^{p}\nonumber \\
 &  & +\left[d\left(S_{t_{j}},S_{t_{j+1}}\right)^{p}-d\left(S_{t_{j}},S_{s}\right)^{p}-d\left(S_{s},S_{t_{j+1}}\right)^{p}\right].\label{eq:intermediate}
\end{eqnarray}
By the mean value theorem and triangle inequality, 
\[
d\left(S_{t_{j}},S_{t_{j+1}}\right)^{p}-d\left(S_{t_{j}},S_{s}\right)^{p}\leq p\left\Vert S\right\Vert _{p-var}^{p-1}d\left(S_{s},S_{t_{j+1}}\right),
\]
which, together with (\ref{eq:intermediate}), implies (\ref{eq:continuity of p-var}). \end{proof}
\begin{lem}
\label{continuity of signature path}If $S:\left[0,T\right]\rightarrow G_{p.r.c}^{\left(*\right)}$
is continuous and has finite $p$-variation, then $S_{\cdot}$ is
continuous in $\left(\mathcal{G}_{p},d_{\mathcal{G}_{p}}\right)$. \end{lem}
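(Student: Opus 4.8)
The plan is to reduce the statement to the quantitative estimate
\begin{equation}
d_{\mathcal{G}_p}\left(S_s,S_t\right)\leq2\left(1+p\right)\left\Vert S\right\Vert _{p-var}^{p-1}\left\Vert S|_{\left[s,t\right]}\right\Vert _{p-var},\qquad0\leq s\leq t\leq T,\label{eq:cts target}
\end{equation}
and then to invoke the elementary fact that a continuous path of finite $p$-variation satisfies $\left\Vert S|_{\left[s,t\right]}\right\Vert _{p-var}\to0$ as $t-s\to0$ (equivalently, its $p$-variation control is continuous; see e.g. \cite{FV10}): combined with (\ref{eq:cts target}) this yields continuity of $t\mapsto S_t$ at every point of $\left[0,T\right]$. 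Throughout we take $S_0=\mathbf{1}$, as is implicit in the statement (otherwise $S_t$ need not lie in $\mathcal{G}_p$); then for each $t$ the path $S|_{\left[0,t\right]}$ is a continuous finite $p$-variation path in $G_{p.r.c}^{\left(*\right)}$ starting at $\mathbf{1}$, and by Lemma \ref{existence and uniqueness of reduced path} we let $R_t\in\mathcal{S}_p$ be its reduced path, so that $\mathbf{P}\left(R_t\right)=S_t$ and hence $d_{\mathcal{G}_p}\left(S_s,S_t\right)=d\left(R_s,R_t\right)$ with $d$ the $\mathbb{R}$-tree metric of Proposition \ref{tree metric}.

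Fix $0\leq s\leq t\leq T$. Since $R_s\wedge R_t\preceq R_s$ and $R_s\wedge R_t\preceq R_t$, after reparametrisation we may write $R_s\wedge R_t=R_s|_{\left[0,\sigma\right]}=R_t|_{\left[0,\tau\right]}$; let $g=R_s\left(\sigma\right)=R_t\left(\tau\right)$ be the common endpoint and let $A=R_s|_{\left[\sigma,\cdot\right]}$, $B=R_t|_{\left[\tau,\cdot\right]}$ be the two ``tails'', injective paths from $g$ to $S_s$ and from $g$ to $S_t$. The crucial point is that the concatenation $\beta:=\overleftarrow{A}\star B$, a finite $p$-variation path in $G_{p.r.c}^{\left(*\right)}$ from $S_s$ to $S_t$ through $g$, is injective. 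Indeed, if $A$ and $B$ shared a point $h\neq g$, say $h=R_s\left(r\right)=R_t\left(r'\right)$ with $r>\sigma$ and $r'>\tau$, then $R_s|_{\left[0,r\right]}$ and $R_t|_{\left[0,r'\right]}$ would be injective paths with the same endpoints $\mathbf{1}$ and $h$, hence reparametrisations of each other by the \emph{uniqueness} in Lemma \ref{existence and uniqueness of reduced path}; viewed in $\mathcal{S}_p$, the path $R_s|_{\left[0,r\right]}$ would then be a lower bound of both $R_s$ and $R_t$, whence $R_s|_{\left[0,r\right]}\preceq R_s\wedge R_t=R_s|_{\left[0,\sigma\right]}$ by the greatest-lower-bound property (statement 3 of Lemma \ref{properties of partially ordered set}), forcing $r\leq\sigma$ -- a contradiction. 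Since $\beta$ is injective with the same endpoints as $S|_{\left[s,t\right]}$, the uniqueness in Lemma \ref{existence and uniqueness of reduced path} identifies $\beta$, up to reparametrisation, with the reduced path of $S|_{\left[s,t\right]}$; hence by the bound (\ref{eq:p-variation control}),
\[
\left\Vert A\right\Vert _{p-var}=\left\Vert \overleftarrow{A}\right\Vert _{p-var}\leq\left\Vert \beta\right\Vert _{p-var}\leq\left\Vert S|_{\left[s,t\right]}\right\Vert _{p-var},\qquad\left\Vert B\right\Vert _{p-var}\leq\left\Vert \beta\right\Vert _{p-var}\leq\left\Vert S|_{\left[s,t\right]}\right\Vert _{p-var},
\]
using that $\overleftarrow{A}$ and $B$ are an initial and a terminal sub-path of $\beta$ and that reversal preserves $p$-variation.

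To finish, note that by Proposition \ref{tree metric} and $R_s\wedge R_t\preceq R_s,R_t$,
\[
d_{\mathcal{G}_p}\left(S_s,S_t\right)=\left(\left\Vert R_s\right\Vert _{p-var}^{p}-\left\Vert R_s\wedge R_t\right\Vert _{p-var}^{p}\right)+\left(\left\Vert R_t\right\Vert _{p-var}^{p}-\left\Vert R_s\wedge R_t\right\Vert _{p-var}^{p}\right),
\]
both summands being nonnegative by statement 4 of Lemma \ref{properties of partially ordered set}. Applying Lemma \ref{p-variation estimate} to $R_s$ with split point $\sigma$ (so $R_s|_{\left[0,\sigma\right]}=R_s\wedge R_t$ and $R_s|_{\left[\sigma,\cdot\right]}=A$), and similarly to $R_t$ with $B$, these summands are at most $\left(1+p\right)\left\Vert R_s\right\Vert _{p-var}^{p-1}\left\Vert A\right\Vert _{p-var}$ and $\left(1+p\right)\left\Vert R_t\right\Vert _{p-var}^{p-1}\left\Vert B\right\Vert _{p-var}$; since $\left\Vert R_s\right\Vert _{p-var}\leq\left\Vert S|_{\left[0,s\right]}\right\Vert _{p-var}\leq\left\Vert S\right\Vert _{p-var}$ and likewise for $R_t$, the estimate (\ref{eq:cts target}) follows, and letting $t\downarrow s$ and $s\uparrow t$ completes the proof.

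The main obstacle is the injectivity of the bridge path $\beta$: this is precisely the point at which the \emph{uniqueness} half of Lemma \ref{existence and uniqueness of reduced path} and the characterisation of $\wedge$ as a greatest lower bound are needed, and it is what allows one to control $\left\Vert A\right\Vert _{p-var}$ and $\left\Vert B\right\Vert _{p-var}$ by $\left\Vert S|_{\left[s,t\right]}\right\Vert _{p-var}$; everything else is a direct application of Lemma \ref{p-variation estimate} together with the continuity of the $p$-variation control.
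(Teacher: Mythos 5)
Your proof is correct and takes essentially the same route as the paper's: both use the uniqueness half of Lemma \ref{existence and uniqueness of reduced path} to identify the reduced path of $S|_{\left[s,t\right]}$ with the injective ``V-shaped'' path through the meet point $S_{s}\wedge S_{t}$, and then control the tree distance via Lemma \ref{p-variation estimate} together with the fact that $\left\Vert S|_{\left[s,t\right]}\right\Vert _{p-var}\rightarrow0$ as $t-s\rightarrow0$. The only cosmetic difference is that the paper applies Lemma \ref{p-variation estimate} to the concatenations $\mathbf{P}^{-1}\left(S_{u}\right)\star\overleftarrow{S|_{\left[s,u\right]}}$ and $\mathbf{P}^{-1}\left(S_{u}\right)\star S|_{\left[u,t\right]}$ with $S_{u}=S_{s}\wedge S_{t}$ located on the original path, whereas you apply it directly to the reduced paths $R_{s},R_{t}$ and bound the tails by $\left\Vert S|_{\left[s,t\right]}\right\Vert _{p-var}$, spelling out the injectivity of the bridge path which the paper asserts without detail.
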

\begin{proof}
We first argue that for all $s<t$, there is a $u\in\left[s,t\right]$
such that $S_{u}=S_{s}\wedge S_{t}$. By applying Lemma \ref{Existence of simple}
to erase loops from $S|_{\left[s,t\right]}$, we obtain an injective
path $\tilde{\mathbf{S}}$ with finite $p$-variation connecting $S_{s}$
and $S_{t}$. By the definition of $S_{s}\wedge S_{t}$, there is
an injective path $\hat{\mathbf{S}}$ with finite $p$-variation connecting
$S_{s}$ to $S_{s}\wedge S_{t}$ and then to $S_{t}$. By the uniqueness
of reduced path (Lemma \ref{existence and uniqueness of reduced path}),
$\hat{\mathbf{S}}$ must coincide with $\tilde{\mathbf{S}}$, implying
our claim. We now note that

\begin{eqnarray*}
d_{\mathcal{G}_{p}}\left(S_{s},S_{t}\right) & \leq & \left\Vert \mathbf{P}^{-1}\left(S_{u}\right)\star\overleftarrow{S|_{\left[s,u\right]}}\right\Vert _{p-var}^{p}-\left\Vert \mathbf{P}^{-1}\left(S_{u}\right)\right\Vert _{p-var}^{p}\\
 &  & +\left\Vert \mathbf{P}^{-1}\left(S_{u}\right)\star S_{\left[u,t\right]}\right\Vert _{p-var}^{p}-\left\Vert \mathbf{P}^{-1}\left(S_{u}\right)\right\Vert _{p-var}^{p}
\end{eqnarray*}
which converges to $0$ as $t\rightarrow s$ by the continuity estimate
for the right concatenation, Lemma \ref{p-variation estimate}. 
\end{proof}
\begin{proof} [Proof of the "Paths with trivial signature are tree-like" part of Theorem 2.1]Let
$x\in WG\Omega_{p}\left(V\right)$ be such that $S\left(x\right)_{0,T}=1$.
By the Extension Theorem (Proposition \ref{extension theorem}), $S\left(x\right)_{0,t}\in G_{p.r.c.}^{\left(*\right)}$
for all $t$. In the Definition \ref{tree like path} of tree-like
path, take $\tau$ to be $\left(\mathcal{G}_{p},d_{\mathcal{G}_{p}}\right)$
(which is a $\mathbb{R}$-tree as shown earlier in Proposition \ref{tree metric}),
$\phi\left(t\right)=S\left(x\right)_{0,t}$ and $\psi\left(z\right)=\pi_{\left\lfloor p\right\rfloor }\left(z\right)$.
Then by the definition of signature, $x_{t}=\pi_{\left\lfloor p\right\rfloor }\left(S\left(x\right)_{0,t}\right)$.
The continuity of $\phi$ has been shown in Lemma \ref{continuity of signature path}.\end{proof}

\section*{Acknowledgement}
All four authors gratefully acknowledge the support of ERC (Grant Agreement No.291244 Esig). The third author has also been supported by EPSRC (EP/F029578/1). We would like to thank Prof. Thierry Levy for giving detailed and useful suggestions for the first draft of this paper.

\end{document}